\newcommand{\Rmnum}[1]
{\expandafter\@slowromancap\romannumeral #1@}
\newtheorem{thm}{Theorem}[section]
\newtheorem{prop}[thm]{Proposition}
\newtheorem{lemma}[thm]{Lemma}
\newcounter{foo}[subsection]
\newtheorem{claim}[foo]{Claim}
\newtheorem{example}[thm]{Example}
\newtheorem{defin}[thm]{Definition}
\theoremstyle{definition}
\newtheorem*{remark}{Remark}
\newenvironment{re}{\begin{remark} \rm}{\end{remark}}
\begin{document}		
	\renewcommand{\baselinestretch}{1.3}
	\title{The maximum sum of sizes of non-empty cross $t$-intersecting families}
	\author{Shuang Li\textsuperscript{a}\quad Dehai Liu\textsuperscript{a,$\ast$}\quad Deping Song\textsuperscript{a} \quad Tian Yao\textsuperscript{b}
		\\
		\\{\footnotesize  \textsuperscript{a} \em  Laboratory of Mathematics and Complex Systems (MOE),}  \\
		{\footnotesize  \em School of Mathematical Sciences, Beijing Normal University, Beijing, 100875, China }
		\\{\footnotesize  \textsuperscript{b} \em School of Mathematical Sciences, Henan Institute of Science and Technology, Xinxiang 453003, China}}
	\date{}
	\maketitle
	
	\footnote{\quad * Corresponding author.}
	\footnote{\scriptsize\qquad{\em E-mail address:} ~~lishuangyx@mail.bnu.edu.cn (S. Li),  liudehai@mail.bnu.edu.cn (D. Liu), songdeping@mail.bnu.edu.cn(D. Song), tyao@hist.edu.cn(T. Yao).}
	\begin{abstract}
		Let $[n]:=\lbrace 1,2,\ldots,n  \rbrace$, and $M$ be a set of positive integers. Denote the family of all subsets of $[n]$ with sizes in $M$ by $\binom{\left[n\right]}{M}$. The non-empty families  $\mathcal{A}\subseteq\binom{\left[n\right]}{R}$ and   $\mathcal{B}\subseteq \binom{\left[n\right]}{S}$ are said to be cross $t$-intersecting if $|A\cap B|\geq t$ for all $A\in \mathcal{A}$ and $B\in \mathcal{B}$.  In this paper, we determine the maximum sum of sizes of non-empty cross $t$-intersecting families, and characterize the extremal families.  Similar result for finite vector spaces is also proved.
		
		\noindent {\em Key words:} Erd\H{o}s-Ko-Rado Theorem; non-empty cross $t$-intersecting; finite sets; vector spaces
	\end{abstract}

	\section{Introduction}
	
	Let $\left[n\right]:=\left\{ 1, 2, \ldots, n\right\}$, and
	$\binom{\left[n\right]}{k}$ be the family of all $k$-subset of $[n]$. For a positive integer $t$, a family $\mathcal{F}\subseteq \binom{[n]}{k}$ is called \textit{$t$-intersecting} if $|A\cap B|\geq t$   for any $A, B\in \mathcal{F}$. The structures of $t$-intersecting families of $\binom{[n]}{k}$ with maximum size have been completely determined \cite{MR0140419,MR0519277,MR0771733,MR1429238}, which are known as the Erd\H{o}s-Ko-Rado Theorem for sets.
	
     As a generalization, cross $t$-intersecting families can be considered.	For  $M\subseteq [n]$, write $\binom{\left[n\right]}{M}:=\bigcup_{m\in M} \binom{\left[n\right]}{m}$.  Let  $R$ and $S$ be subsets of $[n]$ and $t$ a positive integer with $ t\leq \min R\cup S$.  We say that two non-empty families  $\mathcal{A}\subseteq\binom{\left[n\right]}{R}$ and   $\mathcal{B}\subseteq \binom{\left[n\right]}{S}$ are \textit{non-empty cross $t$-intersecting} if $|A\cap B|\geq t$ for all $A\in \mathcal{A}$ and $B\in \mathcal{B}$.    The problem of maximizing the sum of  sizes of non-empty cross $t$-intersecting families has been attracting much attention. In \cite{MR0219428}, Hilton and Milner solved this problem for  $R=S=\lbrace  r\rbrace$ and  $t=1$. Frankl and Tokushige \cite{MR1178386} obtained the result for  $R=\lbrace r\rbrace$, $S=\lbrace s\rbrace$ and $t=1$. 
	Wang and Zhang \cite{MR2971702} solved this problem for $R=\lbrace r\rbrace$, $S=\lbrace s\rbrace$ and   $t<\min \lbrace r,s \rbrace$, determining structures of extremal families. 	
	
	Recently, Borg and Feghali \cite{MR4441255} got the maximum sum of  sizes for  $R=[r]$, $S=[s]$ and $t=1$. There are some results on weighted version of this problem, see \cite{MR4644282,Liu202330910,MR4618236} for details. 
	For general $R$, $S$ and $t$, by  \cite[Theorem 1.2]{MR4618236}, we can derive the maximum sum of sizes  if $3\max R\cup S-t\leq n$.  In this paper, by characterizing a special independent sets of non-complete bipartite graph, we prove the same result for $\max R+\max S -t<n$.  Moreover, when $n$ is sufficiently large, the extermal structures are characterized. The following theorem is one of our main results.

	\begin{thm}\label{1}
	Let $n$, $r$, $s$ and $t$ be positive integers, and $R$, $S$ be  subsets of $[n]$ with  $t\leq \min R\cup S$,  $r=\max R$,  $s=\max S$ and  $r+s-t<n$.  
		If $\mathcal{A}\subseteq\binom{\left[n\right]}{R}$ and   $\mathcal{B}\subseteq \binom{\left[n\right]}{S}$ are non-empty  cross $t$-intersecting,   then 	  
		\begin{equation}\tag{$1.1$}
			\left|\mathcal{A}\right|+\left|\mathcal{B}\right|\leq \max\left\{ 1+ \sum_{i\in S}\sum_{j=t}^{i}\binom{r}{j}\binom{n-r}{i-j},\  1+\sum_{i\in R}\sum_{j=t}^{i}\binom{s}{j}\binom{n-s}{i-j}  \right\}.
		\end{equation}
		Moreover, the following hold.
		\begin{enumerate}[\normalfont(1)]
			\vspace{-0.2cm}	\item If $R=S$ and $2r< n$,  then  equality holds if and only if one of the following holds:
			\begin{enumerate}[\normalfont(i)]		
				\item $\mathcal{A}=\left\{ A \right\}$ and $\mathcal{B}=\left\{ B\in \binom{[n]}{S}: |B\cap A|\geq t\right\}$ for some $A\in \binom{[n]}{r}$;
				\item $\mathcal{B}=\left\{ B \right\}$ and $\mathcal{A}=\left\{ A\in \binom{[n]}{R}: |A\cap B|\geq t\right\}$ for some $B\in \binom{[n]}{s}$;
				\item $R=S=\left\{1,  2 \right\}$ and $\mathcal{A}=\mathcal{B}=\left\{  C\in \binom{[n]}{[2]}: i\in C\right\}$ for some $i \in [n]$;
				\item $R=S=\left\{ 2 \right\}$ and $\mathcal{A}=\mathcal{B}=\left\{  C\in \binom{[n]}{ 2}: i\in C\right\}$ for some $i\in [n]$.			
			\end{enumerate}
			\vspace{-0.2cm}	\item If $R\neq S$ and 
			$\max \left\{ 2^{s+1}|R|(s-t)+2r+1, \ 2^{r+1}|S|(r-t)+2s+1 \right\}\leq n,$
			then equality holds if and only if one of the following holds:		
			\begin{enumerate}[\normalfont(i)]
				\vspace{-0.2cm}	\item $\max R\bigtriangleup S\in S$ and $\mathcal{A}=\left\{ A \right\}$ and $\mathcal{B}=\left\{ B\in \binom{[n]}{S}: |B\cap A|\geq t\right\}$ for some $A\in \binom{[n]}{r}$;
				\vspace{-0.5cm}	\item $\max R\bigtriangleup S \in R$ and $\mathcal{B}=\left\{ B \right\}$ and $\mathcal{A}=\left\{ A\in \binom{[n]}{R}: |A\cap B|\geq t\right\}$ for some $B\in \binom{[n]}{s}$.
			\end{enumerate}
		\end{enumerate}
	\end{thm}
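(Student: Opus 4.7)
The plan is to reformulate the problem via the bipartite graph $G$ on parts $\binom{[n]}{R}$ and $\binom{[n]}{S}$ with edges $\{A,B\}$ whenever $|A\cap B|<t$: a pair of non-empty cross $t$-intersecting families is precisely a pair of non-empty independent subsets on the two sides, and the quantity $|\mathcal{A}|+|\mathcal{B}|$ is what we wish to maximize. A direct check shows the constructions in (1)(i)--(ii) and (2)(i)--(ii) are cross $t$-intersecting and attain the two terms $1+T_S(r)$ and $1+T_R(s)$ on the right-hand side of (1.1), where I write $T_X(y):=\sum_{i\in X}\sum_{j=t}^{i}\binom{y}{j}\binom{n-y}{i-j}$; this gives the easy direction of the bound.

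For the upper bound I split on the size of the smaller family. If $\mathcal{A}=\{A_0\}$ with $|A_0|=k\in R$, then every $B\in\mathcal{B}$ satisfies $|B\cap A_0|\ge t$, so $|\mathcal{B}|\le T_S(k)\le T_S(r)$ by the trivial monotonicity of $T_S$ in its argument, and the symmetric estimate covers $|\mathcal{B}|=1$. The substantive task is the generic case $|\mathcal{A}|,|\mathcal{B}|\ge 2$, where I need the strict inequality
$|\mathcal{A}|+|\mathcal{B}|<\max\{1+T_S(r),\,1+T_R(s)\},$
and this is where the hypothesis $r+s-t<n$ is essential; I expect this to be the main obstacle. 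My plan is to apply the simultaneous $(i,j)$-shift to both families (which preserves cross $t$-intersection, the sizes $|\mathcal{A}|,|\mathcal{B}|$, and the maxima $r,s$), reducing to left-compressed families, and then to select two distinct members $A_0,A_1\in\mathcal{A}$ whose union has controlled size. The set of $B$'s $t$-intersecting both is strictly smaller than the set $t$-intersecting $A_0$ alone, with a quantifiable gap that is positive precisely when $r+s-t<n$; combining this refined estimate with the crude bound $|\mathcal{A}|\le T_R(s)$ obtained by fixing a maximum-size $B_0\in\mathcal{B}$ yields the desired strict inequality. This is exactly what the introduction describes as characterizing a ``special independent set of a non-complete bipartite graph''.

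The extremal characterization is then extracted by tracing the equality cases in the chain above. For (1) with $R=S$ and $2r<n$, equality forces one of $\mathcal{A},\mathcal{B}$ to be a singleton paired with a full $t$-star through it, and the two sporadic possibilities (1)(iii)--(iv) when $R=S=\{1,2\}$ or $\{2\}$ arise exactly where the two star constructions in (i) and (ii) coincide into a common structure with $\mathcal{A}=\mathcal{B}$. For (2), the asymmetry of $R$ and $S$ selects whichever of $1+T_S(r)$ and $1+T_R(s)$ is the larger term in the maximum, a choice governed by whether $\max R\triangle S$ lies in $S$ or $R$; the exponential lower bound on $n$ in (2) then amplifies the strict-inequality gap above enough to rule out every candidate extremal configuration not of the advertised form.
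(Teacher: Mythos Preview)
Your outline has a genuine gap at the heart of the ``generic'' case $|\mathcal{A}|,|\mathcal{B}|\ge 2$. Combining the refined estimate $|\mathcal{B}|\le T_S(r)-\delta$ (from forcing $t$-intersection with two chosen $A_0,A_1$) with the crude $|\mathcal{A}|\le T_R(s)$ gives only $|\mathcal{A}|+|\mathcal{B}|\le T_R(s)+T_S(r)-\delta$; for this to drop below $1+\max\{T_S(r),T_R(s)\}$ you would need $\delta>\min\{T_S(r),T_R(s)\}-1$, i.e.\ essentially an entire $t$-star, whereas the loss from one extra intersection constraint is only a small fraction of that. Shifting does not rescue this: the difficulty is the \emph{trade-off} between $|\mathcal{A}|$ and $|\mathcal{B}|$, and what must really be shown is that every member of $\mathcal{A}$ beyond the first costs at least one member of $\mathcal{B}$. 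The paper achieves this not by shifting but via the Wang--Zhang fragment method: one lets $S_n$ act on the bipartite graph, verifies that the orbits $\binom{[n]}{r_i}$ are (away from $n=2r_i$) primitive $S_n$-sets not contained in any nontrivial independent set, and then an intersection-of-fragments lemma together with a minimality argument (Lemmas~\ref{4}, \ref{c}, \ref{b} feeding into Theorem~\ref{import}) drives any optimal $\mathcal{A}$ down to a single orbit representative. The imprimitive case $n=2r_p$ needs a separate and fairly delicate case analysis (Claim~\ref{a} and Cases~2.1--2.4 in the proof of~(1.1)).

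Your description of the sporadic cases (1)(iii)--(iv) is also off: they are \emph{not} instances where (i) and (ii) coincide. In (i) and (ii) one family is a singleton; in (iii) and (iv) both $\mathcal{A}$ and $\mathcal{B}$ equal the full point-star through some $i$ and neither is a singleton. These are genuinely new equality configurations that must be detected separately; in the paper they emerge from the nontrivial fragments of the top-layer uniform graph via Theorem~\ref{ref1}, which is precisely where $(r,s,t)=(2,2,1)$ gets singled out.
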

	\begin{re}
		The condition $r+s-t<n$ in Theorem \ref{1} is necessary.    Let $R=S=\left\{1,   z \right\}$,   $t
		=1$ and $n=2z-1$,   where $z\geq 3$.   It is easily check that   $\mathcal{A}=\mathcal{B}=\binom{[n]}{z}$ are cross $1$-intersecting families with
		$ |\mathcal{A}|+|\mathcal{B}|= 2\binom{n}{z}>1+z+\binom{n}{z}. $ 
	\end{re}  
	
	Intersection problems have been extended to some other mathematical objects, for example, vector spaces.  Let $V$ be an  $n$-dimensional vector space over a finite filed $\mathbb{F}_{q}$, where $q$ is a prime power.  
	Denote the family of all $k$-dimensional subspaces of $V$ by ${V \brack k}$.  Then the size of ${V \brack k}$ equals ${n \brack k}:=\prod_{i=0}^{k-1}\frac{q^{n-i}-1}{q^{k-i}-1}$.  The Erd\H{o}s-Ko-Rado Theorem for vector spaces was proved in \cite{MR0721612,MR0810699,MR0867648,MR0382015,MR0382016,MR2231096}, in this paper, we focus on cross $t$-intersecting families for  vector spaces.  
	For $M\subseteq [n]$, write ${V \brack M}:=\bigcup_{m\in M} {V \brack m}$.   Let  $R$ and $S$ be subsets of $[n]$ and $t$ a positive integer with $ t\leq \min R\cup S$.  We say that two non-empty families  $\mathcal{A}\subseteq{ V \brack R}$ and   $\mathcal{B}\subseteq {V \brack S}$ are \textit{non-empty cross $t$-intersecting} if $\dim(A\cap B)\geq t$ for all $A\in \mathcal{A}$ and $B\in \mathcal{B}$.
	Wang and  Zhang \cite{MR2971702}  determined the maximum sum of sizes and the extremal families of non-empty cross $t$-intersecting families for  $R=\lbrace r\rbrace$, $S=\lbrace s\rbrace$ and  $t<\min \lbrace r,s \rbrace$.

	In this paper, we  determine the maximum sum of sizes of non-empty cross $t$-intersecting families for general $R$, $S$ and $t$, and characterize the extremal families under certain conditions. 
	\begin{thm}\label{2}
		Let $n$, $r$, $s$ and $t$ be positive integers, and $R$, $S$ be  subsets of $[n]$ with  $t\leq \min R\cup S$,  $r=\max R$,  $s=\max S$ and  $r+s-t<n$.  
		If $\mathcal{A}\subseteq{V \brack R}$ and  $\mathcal{B}\subseteq {V \brack S}$ are non-empty  cross $t$-intersecting,   then		
		\begin{equation} \tag{$1.2$}
			\left|\mathcal{A}\right|\!+\! \left|\mathcal{B}\right|\!\leq \!\max\left\{ 1+ \sum_{i\in S}\sum_{j=t}^{i}q^{(r-j)(i-j)}{r \brack j}{n-r \brack i-j},\ 
			1+\sum_{i\in R}\sum_{j=t}^{i}q^{(s-j)(i-j)}{s \brack j}{n-s \brack i-j} \right\}.
		\end{equation}
		Moreover, the following hold. 
		\begin{enumerate}[\normalfont(1)]
			\vspace{-0.2cm}	\item If $R=S$,  then  equality holds if and only if one of the following holds:
			\begin{enumerate}[\normalfont(i)]
				\vspace{-0.2cm}	\item  $\mathcal{A}=\left\{ A \right\}$ and $\mathcal{B}=\left\{ B\in {V \brack S} : \dim(B\cap A)\geq t\right\}$ for some $A\in {V \brack r}$;
					\item $\mathcal{B}=\left\{ B \right\}$ and $\mathcal{A}=\left\{ A\in {V \brack R}: \dim(A\cap B)\geq t\right\}$ for some $B\in {V \brack s}$. 
			\end{enumerate}		
			\vspace{-0.2cm}	\item    If $R\neq S$ and $s^{2}+r^{2}+sr\leq n$,    then  equality holds if and only if one of the following holds:
			\begin{enumerate}[\normalfont(i)]
				\vspace{-0.2cm}	\item  $\max R\bigtriangleup S \!\in \!S$ and $\mathcal{A}\!=\!\left\{ A \right\}$ and $\mathcal{B}=\left\{ B\in {V \brack S} : \dim(B\cap A)\geq t\right\}$ for some $A\in {V \brack r}$;
				\vspace{-0.6cm}	\item $\max R\bigtriangleup S \!\in\! R$ and $\mathcal{B}\!=\!\left\{ B \right\}$ and $\mathcal{A}=\left\{ A\in {V \brack R}: \dim(A\cap B)\geq t\right\}$ for some $B\in {V \brack s}$. 
			\end{enumerate}
		\end{enumerate}
	\end{thm}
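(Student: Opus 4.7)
My plan is to mirror the proof of Theorem \ref{1}, substituting Gaussian coefficients and subspace counting for their set-theoretic analogues. For each subspace $A$ of dimension $a$, I first set up the $t$-neighborhood
$$N_t(A) := \left\{B\in {V \brack S}:\ \dim(A\cap B)\geq t\right\}$$
and compute, by partitioning the $B$'s according to $\dim(A\cap B)=j$, that
$$|N_t(A)|=\sum_{i\in S}\sum_{j=t}^{\min(a,i)} q^{(a-j)(i-j)}{a\brack j}{n-a\brack i-j}.$$
A standard Gaussian-binomial monotonicity shows this is nondecreasing in $a$, so the first expression in the maximum on the right-hand side of (1.2) equals $1+|N_t(A^*)|$ for any $A^*\in{V\brack r}$, and symmetrically for the second.

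With this in hand, I split into cases. If $\mathcal{A}=\{A_0\}$ is a singleton, then $\mathcal{B}\subseteq N_t(A_0)$, and the monotonicity in $\dim A_0$ directly yields $|\mathcal{A}|+|\mathcal{B}|\leq 1+|N_t(A^*)|$; the case $|\mathcal{B}|=1$ is symmetric. The essential case is $|\mathcal{A}|,|\mathcal{B}|\geq 2$, where I plan to show $|\mathcal{A}|+|\mathcal{B}|\leq 1+|N_t(A_0)|$ for some $A_0\in\mathcal{A}$ of maximal dimension. Rearranging, this amounts to producing an injection
$$\Phi:\ \mathcal{A}\setminus\{A_0\}\ \hookrightarrow\ N_t(A_0)\setminus\mathcal{B}.$$
For each $A\in\mathcal{A}\setminus\{A_0\}$, $\Phi(A)\in{V\brack s}$ should satisfy $\dim(A_0\cap\Phi(A))\geq t$ (placing $\Phi(A)$ in $N_t(A_0)$) while $\dim(A\cap\Phi(A))<t$ (forcing $\Phi(A)\notin\mathcal{B}$, since $\mathcal{A}$ and $\mathcal{B}$ are cross $t$-intersecting). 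The hypothesis $r+s-t<n$ supplies the ambient room: fixing a complement $W$ of $A_0$ of dimension $n-r$, there is sufficient freedom to build $\Phi(A)$ as the span of a suitable $t$-dimensional subspace of $A_0$ together with $s-t$ vectors drawn from $W$ chosen to keep $A\cap\Phi(A)$ small.

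The main obstacle is the injectivity of $\Phi$: pointwise existence of an admissible $\Phi(A)$ follows from a dimension count, but making globally coherent choices requires either a canonical selection rule (for instance, fixing a full flag compatible with $A_0$ and taking $\Phi(A)$ to be the lexicographically smallest admissible subspace relative to this flag) or a preliminary subspace-shifting operation that preserves cross $t$-intersection and the sum of sizes, reducing to a shifted configuration where the desired injection is built from standard flag coordinates. Once the bound is tight, equality compels $\Phi$ to be a bijection, which in turn rigidifies $\mathcal{A}$ and $\mathcal{B}$. In part (1) ($R=S$), this rigidity together with the symmetry between the two sides yields either (i) or (ii). In part (2) ($R\neq S$), the stronger hypothesis $s^2+r^2+sr\leq n$ is invoked to rule out hybrid extremal configurations, and the analysis of which side attains the maximum in (1.2) depends on whether $\max R\bigtriangleup S$ lies in $R$ or $S$, producing exactly one of (i) or (ii).
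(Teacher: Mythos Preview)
Your proposal has a genuine gap at precisely the point you flag as ``the main obstacle'': the injection $\Phi:\mathcal{A}\setminus\{A_0\}\hookrightarrow N_t(A_0)\setminus\mathcal{B}$ is never constructed. Pointwise existence of an admissible $\Phi(A)$ is plausible, but neither of your two proposed remedies delivers injectivity. Taking the lexicographically smallest admissible subspace relative to a fixed flag gives no reason why distinct $A,A'$ should produce distinct images; many $A$'s can share the same minimal ``bad-for-$A$, good-for-$A_0$'' subspace. As for shifting, there is no standard subspace-shifting operation in the literature that simultaneously preserves cross $t$-intersection and the sum of sizes for non-uniform families in ${V\brack R}$ and ${V\brack S}$; the set-theoretic shifts rely on coordinate swaps that have no clean $GL(V)$-equivariant analogue, and you would have to invent and verify such an operation from scratch. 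Even granting the bound, your equality analysis is incomplete: bijectivity of $\Phi$ only says $|\mathcal{A}|+|\mathcal{B}|=1+|N_t(A_0)|$, which is compatible with many configurations where $|\mathcal{A}|\geq 2$; nothing in the argument forces $\mathcal{A}$ down to a singleton.

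The paper takes an entirely different route. It encodes the problem as finding the maximum nontrivial independent set in a bipartite graph $G_2(X,Y)$ with $X={V\brack R}$, $Y={V\brack S}$, edges given by $\dim(x\cap y)<t$, and exploits the action of $GL(V)$. A general structural theorem (Theorem~\ref{import}) about such graphs, together with the primitivity of $GL(V)$ on each Grassmannian ${V\brack m}$ (via maximality of parabolic subgroups), reduces $\alpha(X,Y)$ to $\max\{1+|Y|-d(X_k),\,1+|X|-d(Y_\ell)\}$, which is exactly the right-hand side of (1.2) by Lemma~\ref{10}. The equality cases are then handled by a \emph{fragment} analysis: one shows (Propositions~\ref{8} and~\ref{5}) that any extremal configuration other than the listed ones would yield a nontrivial fragment with controlled intersections with each orbit, and then rules this out by degree inequalities (Proposition~\ref{7}) together with the known uniform case of Wang--Zhang (Theorem~\ref{ref2}). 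Part~(2) is finished by a Gaussian-coefficient estimate (Lemma~\ref{11}) showing which side of the maximum is strictly larger. None of this relies on building an explicit injection.
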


	\section{Nontrivial independent sets of bipartite graphs}	
	Some concepts about bipartite graphs were proposed by Wang and Zhang in \cite{MR2971702}. We offer a concise explanation here.  
	Let $G(X,Y)$ be a non-complete bipartite graph with  bipartite sets $X$ and $Y$, the following concepts are applicable:
	
	\begin{itemize}  
		\item An independent set $I$ of $G(X,Y)$ is said to be \textit{nontrivial} if $I\cap X\neq \emptyset$ and $I\cap Y\neq \emptyset$.  
		
		\item By $\alpha(X,Y)$ and $\mathcal{I}(X,Y)$  denote the size and the set of maximum-sized nontrivial independent sets of $G(X,Y)$, respectively.  
		
		\item Set $\mathcal{F}(X)=\left\{ I\cap X: I\in \mathcal{I} \right\}$, $\mathcal{F}(Y)=\left\{ I\cap Y: I\in \mathcal{I} \right\}$ and $\mathcal{F}(X,Y)=\mathcal{F}(X)\cup \mathcal{F}(Y)$.  Each element of $\mathcal{F}(X,Y)$ is called a \textit{fragment} of $G(X,Y)$.
	\end{itemize}  
	
	Let $I\in \mathcal{I}(X,Y)$.  Then it is easy to see that  $I\cap X=X\backslash N(I\cap Y)$ and $I\cap Y=Y\backslash N(I\cap X)$, where $N(I\cap Y)$ and $N(I\cap X)$ are the neighborhoods of $I\cap Y$ and $I\cap X$, respectively. That is to say, in order to determine $\mathcal{I}(X,Y)$, it is sufficient to determine $\mathcal{F}(X)$ or $\mathcal{F}(Y)$. Moreover, we have $$\alpha(X,Y)=|I\cap X|+|Y\backslash N(I\cap X)|=|Y|-(|N(I\cap X)|-|I\cap X|).$$  With the notation $\varepsilon(X)=\min\left\{ |N(A)|-|A|: A\neq \emptyset, N(A)\neq Y\right\}$,   we have $\alpha(X,Y)=|Y|-\varepsilon(X)$. Similarly, write $\varepsilon(Y)=\min\lbrace |N(B)|-|B|: B\neq \emptyset, N(B)\neq X\rbrace.$ We have $\alpha(X,Y)=|X|-\varepsilon(Y)$. Furthermore, the elements in $\mathcal{F}(X,Y)$ have the following relationship.
		
	\begin{lemma}\textnormal{(\cite[Lemma 2.1]{MR2971702})} \label{3}
		Let $G(X,  Y)$ be a non-complete bipartite graph. Then $|Y|-\varepsilon(X)=|X|-\varepsilon(Y),$ and
		\begin{enumerate}[\normalfont(i)]
			\vspace{-0.2cm}	\item $A\in \mathcal{F}(X)$ if and only if $Y\backslash N(A)\in \mathcal{F}$,  and $N(Y\backslash N(A))=X\backslash A$;
			\vspace{-0.2cm}	\item $A\cap B$ and $A\cup B$ are both in $\mathcal{F}(X)$ if $A, B\in \mathcal{F}(X),   A\cap B\neq \emptyset$ and $N(A\cup B)\neq Y$.
		\end{enumerate}		
	\end{lemma}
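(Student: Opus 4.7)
The plan is to reduce everything to the basic bijection between maximum nontrivial independent sets of $G(X,Y)$ and their $X$-parts (equivalently, their $Y$-parts). For any independent set $I$, no edges lie between $I\cap X$ and $I\cap Y$, so $I\cap Y\subseteq Y\setminus N(I\cap X)$ and $I\cap X\subseteq X\setminus N(I\cap Y)$; and for $I\in\mathcal{I}(X,Y)$ both inclusions must be equalities, otherwise $I$ could be enlarged while staying nontrivial and independent. Writing $A=I\cap X$, this yields $|I|=|A|+|Y|-|N(A)|$, and ranging over all eligible $A$ (i.e.\ $A\neq\emptyset$ and $N(A)\neq Y$) the maximization of $|I|$ is equivalent to the minimization of $|N(A)|-|A|$. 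Hence $\alpha(X,Y)=|Y|-\varepsilon(X)$, and the symmetric argument gives $\alpha(X,Y)=|X|-\varepsilon(Y)$, proving the first identity.

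For part (i), the forward direction is immediate: if $A\in\mathcal{F}(X)$, pick $I\in\mathcal{I}(X,Y)$ with $I\cap X=A$; then $I\cap Y=Y\setminus N(A)\in\mathcal{F}(Y)\subseteq\mathcal{F}$, and the equality $I\cap X=X\setminus N(I\cap Y)$ translates to $N(Y\setminus N(A))=X\setminus A$. For the converse, assume $B:=Y\setminus N(A)\in\mathcal{F}(Y)$, realized by some $I\in\mathcal{I}(X,Y)$; then $I\cap X=X\setminus N(B)$, and the accompanying equality $N(Y\setminus N(A))=X\setminus A$ is exactly what identifies $A$ with this $X$-part, giving $A\in\mathcal{F}(X)$.

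For part (ii) the key tool is the submodularity of the neighborhood function in a bipartite graph:
\[
|N(A\cup B)|+|N(A\cap B)|\leq |N(A)|+|N(B)|,
\]
which holds because $N(A\cup B)=N(A)\cup N(B)$ and $N(A\cap B)\subseteq N(A)\cap N(B)$. Combined with $|N(A)|-|A|=|N(B)|-|B|=\varepsilon(X)$ this gives
\[
\bigl(|N(A\cup B)|-|A\cup B|\bigr)+\bigl(|N(A\cap B)|-|A\cap B|\bigr)\leq 2\varepsilon(X).
\]
The hypotheses $A\cap B\neq\emptyset$ and $N(A\cup B)\neq Y$, together with the easy observations $A\cup B\supseteq A\cap B\neq\emptyset$ and $N(A\cap B)\subseteq N(A)\neq Y$, ensure both summands are eligible, so each is at least $\varepsilon(X)$ by the definition of $\varepsilon(X)$. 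Equality must therefore hold in each term, forcing both $A\cap B$ and $A\cup B$ into $\mathcal{F}(X)$.

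The main obstacle I anticipate is the converse in part (i): one has to rule out the possibility that the maximum nontrivial independent set realizing $Y\setminus N(A)$ has $X$-part strictly containing $A$, which is precisely the role of the accompanying equality $N(Y\setminus N(A))=X\setminus A$ in the statement. Once that point is handled, the rest is a clean application of submodularity together with the minimality defining $\varepsilon(X)$.
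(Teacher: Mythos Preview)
The paper does not prove this lemma itself; it is quoted from \cite{MR2971702} (Wang--Zhang, Lemma~2.1) without a proof. Your argument is correct and is essentially the standard one: the identity $\alpha(X,Y)=|Y|-\varepsilon(X)=|X|-\varepsilon(Y)$ follows from the observation that every maximum nontrivial independent set $I$ satisfies $I\cap Y=Y\setminus N(I\cap X)$ and $I\cap X=X\setminus N(I\cap Y)$; part~(i) is then the resulting bijection, and you are right that the equality $N(Y\setminus N(A))=X\setminus A$ is needed for the converse direction (otherwise a strict superset of $A$ with the same neighborhood would give a counterexample); part~(ii) is exactly the submodularity inequality $|N(A\cup B)|+|N(A\cap B)|\le|N(A)|+|N(B)|$ combined with the minimality defining $\varepsilon(X)$.
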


	From the first statement of this lemma it follows that there exists a bijection $\phi:\mathcal{F}(X, Y)\rightarrow \mathcal{F}(X, Y)$ such that
	\begin{equation}
		\phi(A)=\left\lbrace
		\begin{array}{lllllllll}
			Y\backslash N(A)&&&\rm if~\it A\in \mathcal{F}(X);\\
			X\backslash N(A)&&&\rm if~\it A\in \mathcal{F}(Y) .
		\end{array}
		\nonumber
		\right.
	\end{equation}
	Moreover, we have $\phi^{-1}=\phi$.

	Let $X$ be a finite set and $\Gamma$  a group transitively acting on $X$. We say that the action of $\Gamma$ on $X$ is \textit{primitive} if $\Gamma$ preserves no nontrivial partition of $X$ and \textit{imprimitive} otherwise.   It is easy to see that if the action of $\Gamma$ on $X$ is transitive and imprimitive,  then there is a subset $B$ of $X$ such that $1<\left| B\right| < \left| X\right| $ and $\gamma(B)\cap B=B$ or $\emptyset$ for any $\gamma \in \Gamma$. In this case,  $B$ is called an \textit{imprimitive set} in $X$.
	
	Let $G(X,  Y)$ be a non-compete bipartite graph and  $\Gamma$ an automorphism group of $G(X,  Y)$ stabilizing $X$. If $O$ is an orbit under the action of $\Gamma$, then each vertex in  $O$  has the same degree, written as $d(O)$.  For an orbit $O$ under the action of $\Gamma$, we can restrict the action of $\Gamma $ on $O$,  denoted by $\Gamma\mid _{O}$.
	
	To deal with non-uniform cross $t$-intersecting families, we introduce the following theorem. 
	
	\begin{thm}\label{import}
	Let $G(X,  Y)$ be a non-complete bipartite graph and $\Gamma$  an automorphism group of $G(X,  Y)$ stabilizing $X$. Suppose that $\lbrace X_{i}:i\in [k] \rbrace$ and $\lbrace Y_{j}:j\in [\ell] \rbrace$ are all orbits in $X$ and $Y$ under the action of $\Gamma$, respectively, with  $d(X_{1})>d(X_{2})>\cdots>d(X_{k})$ and $d(Y_{1})>d(Y_{2})>\cdots>d(Y_{\ell})$. If for any non-singleton orbit $P$, $\Gamma\mid _{P}$ is primitive and $P$ is not contained in any nontrivial independent set, 
	then $$\alpha(X,  Y)=\max \left\{1+|Y|-d(X_{k}),  1+|X|-d(Y_{\ell})\right\}. $$
	Moreover,  one of the following holds:	
	\begin{enumerate}[\normalfont(i)]
		\vspace{-0.2cm}	\item$|X|-d(Y_{\ell})<|Y|-d(X_{k})$ and $\mathcal{F}(X)=\binom{X_{k}}{1}$;
		\vspace{-0.2cm}	\item$|X|-d(Y_{\ell})>|Y|-d(X_{k})$ and $\mathcal{F}(Y)=\binom{Y_{\ell}}{1}$;
		\vspace{-0.2cm}	\item$|X|-d(Y_{\ell})=|Y|-d(X_{k})$ and $\mathcal{F}(X,Y)\supseteq\binom{X_{k}}{1}  \cup\binom{Y_{\ell}}{1} \cup \phi \left(\binom{X_{k}}{1}\cup \binom{Y_{\ell}}{1}\right)$;
		\vspace{-0.2cm}	\item there exists a $F\in \mathcal{F}(X,Y)$  such that $|F|\geq 2$ and $|F\cap P|\leq 1$ for any $P$.
	\end{enumerate}	
\end{thm}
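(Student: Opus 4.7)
The plan is to obtain the lower bound by explicit constructions, and the upper bound together with the structural dichotomy by analyzing a maximum-sized non-trivial independent set through its $X$-fragment, using primitivity of the orbit actions to preclude fragments that are simultaneously large and non-transversal.

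For the lower bound, the strict ordering $d(X_1)>\cdots>d(X_k)$ together with the non-completeness of $G$ forces $d(X_k)<|Y|$ (and dually $d(Y_{\ell})<|X|$): when $k\geq 2$ this is immediate from $d(X_k)<d(X_1)\leq|Y|$, and when $k=1$ it follows from non-completeness. Hence for any $x\in X_k$ the set $\{x\}\cup(Y\setminus N(x))$ is a non-trivial independent set of size $1+|Y|-d(X_k)$, and symmetrically from $Y_{\ell}$ one obtains size $1+|X|-d(Y_{\ell})$. Thus $\alpha(X,Y)\geq\max\{1+|Y|-d(X_k),\,1+|X|-d(Y_{\ell})\}$.

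For the upper bound, take $A\in\mathcal{F}(X)$ with $\alpha(X,Y)=|A|+|Y|-|N(A)|$. If $|A|=1$ then $\alpha(X,Y)\leq 1+|Y|-d(X_k)$; dually, if the fragment $\phi(A)\in\mathcal{F}(Y)$ produced by Lemma \ref{3}(i) is a singleton, then $\alpha(X,Y)\leq 1+|X|-d(Y_{\ell})$. Comparing the two candidates pins down cases (i)--(iii): the strictly larger of $|Y|-d(X_k)$ and $|X|-d(Y_{\ell})$ determines which of $\mathcal{F}(X)=\binom{X_k}{1}$ or $\mathcal{F}(Y)=\binom{Y_{\ell}}{1}$ holds, while equality forces both singleton families and, via Lemma \ref{3}(i), the $\phi$-images of both into $\mathcal{F}(X,Y)$.

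The crucial step is to show that if $|A|\geq 2$ and $|\phi(A)|\geq 2$, then $A$ must meet every non-singleton orbit in at most one element, placing us in case (iv) with $F=A$. Suppose instead $|A\cap P|\geq 2$ for some non-singleton orbit $P\subseteq X$. By the hypothesis that $P$ lies in no non-trivial independent set we have $P\not\subseteq A$. Choose $A'\in\mathcal{F}(X)$ minimizing $|A'\cap P|$ subject to $|A'\cap P|\geq 2$, and set $B:=A'\cap P$, so $2\leq|B|<|P|$. Primitivity of $\Gamma|_P$ makes $B$ a non-block, giving $\gamma\in\Gamma$ stabilizing $P$ setwise with $\emptyset\neq\gamma(B)\cap B\subsetneq B$. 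When $N(A'\cup\gamma(A'))\neq Y$, Lemma \ref{3}(ii) places $A'\cap\gamma(A')$ and $A'\cup\gamma(A')$ in $\mathcal{F}(X)$: either the intersection has $|\cdot\cap P|\geq 2$ and contradicts minimality of $|B|$, or $|\gamma(B)\cap B|=1$ and the union satisfies $|(A'\cup\gamma(A'))\cap P|=|B\cup\gamma(B)|>|B|$, and iterating the union step eventually forces $P$ itself into a fragment, contradicting the orbit hypothesis. The alternative $N(A'\cup\gamma(A'))=Y$ is handled dually by applying $\phi$ and running the same argument in $\mathcal{F}(Y)$; the condition $B\subsetneq P$ guarantees $A'\cup\gamma(A')\subsetneq X$, so the dual of Lemma \ref{3}(ii) applies and yields the same contradiction. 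The main obstacle will be this final primitivity-and-$\phi$ bookkeeping: cleanly matching the Lemma \ref{3}(ii) closure conditions on both sides of $\phi$ so as to close off the $N(A'\cup\gamma(A'))=Y$ degeneracy and iterate the union step to completion.
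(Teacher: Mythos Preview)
Your proposal has a genuine gap precisely at the point you flag as the main obstacle. When $N(A'\cup\gamma(A'))=Y$, you propose to pass to $\phi$ and ``run the same argument in $\mathcal{F}(Y)$,'' but this cannot work: since $\phi(A')=Y\setminus N(A')$ and $\phi(\gamma(A'))=\gamma(\phi(A'))$, one has
\[
\phi(A')\cap\phi(\gamma(A'))=Y\setminus\bigl(N(A')\cup N(\gamma(A'))\bigr)=Y\setminus N(A'\cup\gamma(A'))=\emptyset,
\]
so the intersection hypothesis of Lemma~\ref{3}(ii) fails on the $Y$-side as well; the fact that $A'\cup\gamma(A')\subsetneq X$ does not rescue it. More broadly, your scheme minimizes $|A'\cap P|$ but never tracks the relation between $|A'|$ and $|\phi(A')|$, so you have no mechanism to force $N(\text{union})\neq Y$ at any stage of the union iteration, and the iteration is not well-founded. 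You are also implicitly attempting to prove that \emph{every} fragment with $|A|,|\phi(A)|\geq 2$ is transversal, which is stronger than what (iv) asserts and is not true in general (cf.\ the nontrivial fragments analyzed later in the paper with $|F\cap X_k|\geq 2$).

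The paper's route avoids all of this. It starts from any fragment $A$ with $|A|\leq|\phi(A)|$ (one of $A,\phi(A)$ always qualifies) and proves (Lemma~\ref{4}) that this inequality alone forces $N(A\cup\gamma(A))\neq Y$ whenever $A\cap\gamma(A)\neq\emptyset$. It then iterates with \emph{intersections}, not unions: each replacement $A\mapsto A\cap\gamma(A)$ strictly shrinks $A$ and preserves $|A|\leq|\phi(A)|$, terminating in a transversal fragment (Lemma~\ref{c}). A separate counting step (Lemma~\ref{b}) then derives the formula for $\alpha(X,Y)$ from the existence of such a fragment; your outline omits this, so even if you reached (iv) the upper bound would remain unproved. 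Cases (i)--(iii) are then obtained by a maximum/minimum fragment argument under the assumption that (iv) fails, rather than by showing your initially chosen $A$ is a singleton.
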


	To prove Theorem \ref{import}, we need the following three lemmas.
	The next lemma is a generalization of (\cite[Lemma 2.2]{MR2971702}), but the proof is identical. We include a concise proof here for completeness.
	\begin{lemma}\label{4}
		Let $G(X,  Y)$ be a non-compete bipartite graph and  $\Gamma$ an automorphism group of $G(X,  Y)$ stabilizing $X$. Suppose that $A\in\mathcal{F}(X,  Y)$ such that $\gamma(A)\cap A\neq \emptyset$ for some $\gamma \in \Gamma$.  If $|A|\leq |\phi(A)|$,   then $A\cup \gamma(A)$ and $A\cap \gamma(A)$ are both in $\mathcal{F}(X,  Y)$. 
	\end{lemma}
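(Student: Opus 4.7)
My plan is to reduce the claim to a direct application of Lemma~\ref{3}(ii). By the symmetry of the setup under the involution $\phi$, I may assume $A\in\mathcal{F}(X)$; the case $A\in\mathcal{F}(Y)$ follows by swapping the roles of $X$ and $Y$. Put $B:=\gamma(A)$. Since $\Gamma$ acts on $G(X,Y)$ by automorphisms stabilizing $X$, it acts on $\mathcal{F}(X)$ and commutes with $\phi$; thus $B\in\mathcal{F}(X)$, $|B|=|A|$, and $\phi(B)=\gamma(\phi(A))$, so $|\phi(B)|=|\phi(A)|$. The hypothesis gives $A\cap B\neq\emptyset$, so once I verify $N(A\cup B)\neq Y$, Lemma~\ref{3}(ii) yields $A\cap B,\,A\cup B\in\mathcal{F}(X)\subseteq\mathcal{F}(X,Y)$, as required.

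The substantial step is therefore to establish $N(A\cup B)\neq Y$, which is exactly where the hypothesis $|A|\leq|\phi(A)|$ must be used. I argue by contradiction. Suppose $N(A\cup B)=Y$. Then
\[ \phi(A)\cap\phi(B)=(Y\setminus N(A))\cap(Y\setminus N(B))=Y\setminus N(A\cup B)=\emptyset. \]
Now form the set
\[ I:=(A\cap B)\cup\bigl(\phi(A)\cup\phi(B)\bigr). \]
Since $A\cup\phi(A)$ and $B\cup\phi(B)$ are independent sets of $G(X,Y)$, no vertex of $\phi(A)$ (respectively $\phi(B)$) is adjacent to any vertex of $A$ (respectively $B$), hence none is adjacent to a vertex of $A\cap B$. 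Therefore $I$ is an independent set of $G(X,Y)$, and it is nontrivial because $A\cap B\neq\emptyset$ and $|\phi(A)|\geq|A|\geq 1$.

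Using the disjointness of $\phi(A)$ and $\phi(B)$ together with $|\phi(A)|=|\phi(B)|$, I compute $|I|=|A\cap B|+2|\phi(A)|$. Because $A\cup\phi(A)$ is a maximum nontrivial independent set of size $\alpha(X,Y)=|A|+|\phi(A)|$, the bound $|I|\leq\alpha(X,Y)$ rearranges to $|\phi(A)|\leq|A|-|A\cap B|\leq|A|-1<|A|$, which contradicts $|A|\leq|\phi(A)|$. This contradiction forces $N(A\cup B)\neq Y$ and completes the proof. The only conceptual move is writing down the auxiliary independent set $I$; everything else is bookkeeping, and I anticipate no further obstacle.
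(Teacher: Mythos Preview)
Your proof is correct. Both you and the paper reduce to Lemma~\ref{3}(ii) and thus to showing $N(A\cup\gamma(A))\neq Y$, but you establish this differently. The paper argues directly with sizes: using $N(A\cap\gamma(A))\subseteq N(A)\cap N(\gamma(A))$ and the defining inequality $|N(A\cap\gamma(A))|\geq|A\cap\gamma(A)|+\varepsilon(X)$, it bounds $|N(A\cup\gamma(A))|\leq|N(A)|+|A|-|A\cap\gamma(A)|<|N(A)|+|\phi(A)|\leq|Y|$. You instead argue by contradiction: assuming $N(A\cup B)=Y$ forces $\phi(A)\cap\phi(B)=\emptyset$, and then the nontrivial independent set $I=(A\cap B)\cup\phi(A)\cup\phi(B)$ has size $|A\cap B|+2|\phi(A)|>\alpha(X,Y)$, which is impossible. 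The underlying arithmetic is essentially the same inequality read two ways, but your argument is more structural (it exhibits an explicit independent set witnessing the contradiction), while the paper's is a shorter direct computation that never leaves the neighborhood-size bookkeeping and makes the role of $\varepsilon(X)$ explicit.
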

	\begin{proof}[\textnormal{\textbf{Proof.}}]
		Without loss of generality, suppose $A\in \mathcal{F}(X)$.  Since $\gamma$ is an automorphism and $\gamma(X)=X$, we have $\gamma(A)\subseteq X$,  $|\gamma(A)|=|A|$ and $|N(\gamma(A))|=|\gamma(N(A))|=|N(A)|$. Thus $\gamma(A)\in \mathcal{F}(X)$.   
		
		It is sufficient to prove $N(A\cup \gamma(A))\neq Y$ by  Lemma \ref{3}.  Note that  $N(A\cup\gamma(A))=N(A)\cup N(\gamma(A))$ and $N(A\cap \gamma(A))\subseteq N(A)\cap N(\gamma(A))$. Then
		\begin{equation*}  
			\begin{aligned}  
				|N(A\cup \gamma(A))| &= |N(A)\cup N(\gamma(A))| = |N(A)| + |N(\gamma(A))| - |N(A)\cap N(\gamma(A))| \\  
				&\leq 2|N(A)| - |N(A\cap\gamma(A))|  \leq 2|N(A)| - (|A\cap \gamma(A)| + \varepsilon(X)) \\  
				&= |N(A)| + |A| + \varepsilon(X) - (|A\cap \gamma(A)| + \varepsilon(X)) = |N(A)| + |A| - |A\cap \gamma(A)|\\& < |N(A)| + |A| \leq |N(A)| + |Y\backslash N(A)| = |Y|,    
			\end{aligned}  
		\end{equation*} 
		as desired. 
	\end{proof}

	\begin{lemma}\label{c}
		Let $G(X,  Y)$ be a non-compete bipartite graph and  $\Gamma$ an automorphism group of $G(X,  Y)$ stabilizing $X$. Suppose that, for any non-singleton orbit $P$ in $X$ under the action of $\Gamma$, $\Gamma\mid _{P}$ is primitive and $P$ is not contained in any nontrivial independent set. If   $|A|\leq |\phi(A)|$ for some  $A\in \mathcal{F}(X)$, then there exists a $F\in \mathcal{F}(X)$ such that $|F\cap O|\leq 1$ for any orbit $O$.
	\end{lemma}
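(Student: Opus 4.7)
The plan is to prove the lemma by an iterative shrinking argument: starting with the given fragment $A\in\mathcal{F}(X)$ satisfying $|A|\le|\phi(A)|$, I repeatedly replace $A$ by an intersection with a suitable group translate to produce a strictly smaller fragment, terminating when every orbit is met in at most one point. If $|A\cap O|\le 1$ already holds for every orbit $O$, take $F=A$ and we are done. Otherwise, some non-singleton orbit $P$ satisfies $|A\cap P|\ge 2$. Because $A$ is a fragment, $A$ is contained in some nontrivial independent set; the hypothesis that $P$ is not contained in any nontrivial independent set then forces $A\cap P\subsetneq P$, so $2\le|A\cap P|<|P|$.

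Now I invoke primitivity. Since $\Gamma\mid_{P}$ is primitive, the proper subset $A\cap P$ of $P$ cannot be a block of the action, so there exists $\gamma\in\Gamma$ with $\emptyset\neq \gamma(A\cap P)\cap(A\cap P)\subsetneq A\cap P$. Because $\gamma$ fixes each $\Gamma$-orbit setwise, $\gamma(P)=P$, and hence $\gamma(A)\cap A\supseteq \gamma(A\cap P)\cap(A\cap P)\neq\emptyset$. Applying Lemma~\ref{4} with the standing hypothesis $|A|\le|\phi(A)|$, the intersection $A':=A\cap\gamma(A)$ lies in $\mathcal{F}(X)$. By construction $|A'\cap P|<|A\cap P|$ and $|A'\cap Q|\le|A\cap Q|$ for every other orbit $Q$, so $|A'|<|A|$; moreover $N(A')\subseteq N(A)$ gives $|\phi(A')|\ge|\phi(A)|\ge|A|>|A'|$, preserving the inequality $|A'|\le|\phi(A')|$.

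Replacing $A$ by $A'$ and iterating, the positive integer $|A|$ strictly decreases at every step while the current set remains a fragment satisfying the size inequality, so the procedure terminates in finitely many steps; the terminal fragment $F\in\mathcal{F}(X)$ has $|F\cap O|\le 1$ for every orbit $O$, as required. The delicate point is sustaining the strict containment $A\cap P\subsetneq P$ that is needed to invoke primitivity, but this is automatic throughout the iteration: every iterate is a fragment, hence contained in a nontrivial independent set, and the hypothesis forbids any non-singleton orbit from sitting inside such a set, so $P\not\subseteq A$ at every stage.
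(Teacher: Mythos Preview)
Your proof is correct and follows essentially the same approach as the paper's: iteratively shrink the fragment $A$ by intersecting it with a group translate $\gamma(A)$, where $\gamma$ is chosen via the primitivity of $\Gamma|_{P}$ on an orbit $P$ meeting $A$ in at least two points, invoking Lemma~\ref{4} together with the preserved inequality $|A\cap\gamma(A)|\le|\phi(A)|\le|\phi(A\cap\gamma(A))|$ to keep the intersection in $\mathcal{F}(X)$. You spell out a few details the paper leaves implicit (why $P\not\subseteq A$, why the process terminates), but the argument is the same.
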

	\begin{proof}[\textnormal{\textbf{Proof.}}]
		It is clear that $A\cap Y_{j}=\emptyset$ for any $j\in[\ell]$ since $A\subseteq X$.	 If $|A\cap X_{i}|\leq 1$ for any $i\in[k]$, then there is nothing to prove. In the following, we may assume  $|A\cap X_{p}|\geq 2$ for some $X_{p}$.
		
		Note that $X_{p}$ is not contained in $A$.  Then  $|A\cap X_{p}|<|X_{p}|$. Since $\Gamma\mid _{X_{p}}$ is primitive, there exists $\gamma\in \Gamma$ such that  $\emptyset \neq A\cap X_{p}\cap \gamma(A\cap X_{p})\neq A\cap X_{p}$, i.e.   $\emptyset \neq A\cap \gamma(A)\cap X_{p}\neq A\cap X_{p}$. Consequently  $\emptyset \neq A\cap \gamma(A)\neq A$.  By Lemma \ref{4},  we have $A\cap \gamma(A)\in \mathcal{F}(X)$. 
		Note that $|A\cap \gamma(A)|\leq |A|\leq |\phi(A)|\leq |\phi(A\cap \gamma(A))|$. Then we can use $A\cap \gamma(A)$ to replace $A$.
		
		 Repeating the above process, we finally get the desired  $F$.
	\end{proof}

	\begin{lemma}\label{b}
		Let $G(X,  Y)$ be a non-compete bipartite graph and  $\Gamma$ an automorphism group of $G(X,  Y)$ stabilizing $X$. Suppose that $\lbrace X_{i}:i\in [k] \rbrace$ and $\lbrace Y_{j}:j\in [\ell] \rbrace$ are all orbits in $X$ and $Y$ under the action of $\Gamma$, respectively, with  $d(X_{1})>d(X_{2})>\cdots>d(X_{k})$ and $d(Y_{1})>d(Y_{2})>\cdots>d(Y_{\ell})$. If there exists a $F\in \mathcal{F}(X,Y)$ such that $|F\cap O|\leq 1$ for any orbit $O$, then $$\alpha(X,Y)=\max \left\{1+|Y|-d(X_{k}), 1+|X|-d(Y_{\ell})\right\}.$$
	\end{lemma}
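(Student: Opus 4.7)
The plan is to establish both the lower and upper bounds for $\alpha(X,Y)$. For the lower bound $\alpha(X,Y) \geq \max\{1+|Y|-d(X_k),\, 1+|X|-d(Y_\ell)\}$, observe that non-completeness forces $d(X_k) < |Y|$ (otherwise $X_k$ would be adjacent to all of $Y$, contradicting either $d(X_1) > d(X_k)$ when $k \geq 2$, or non-completeness directly when $k=1$), so for any $v \in X_k$ the set $\{v\} \cup (Y \setminus N(v))$ is a nontrivial independent set of size $1+|Y|-d(X_k)$; the mirror construction in $Y_\ell$ yields the other bound.

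For the matching upper bound, I exploit the hypothesized fragment $F$. By the symmetry between $X$ and $Y$ encoded in $\phi$, I assume without loss of generality that $F \in \mathcal{F}(X)$; the case $F \in \mathcal{F}(Y)$ is completely parallel and produces $\alpha(X,Y) = 1+|X|-d(Y_\ell)$. Write $F = \{v_1, \ldots, v_m\}$ with each $v_j$ in a distinct orbit $X_{i_j}$, indexed so that $d(X_{i_1}) > d(X_{i_2}) > \cdots > d(X_{i_m})$. Since these are strictly decreasing integers, $d(X_{i_1}) \geq d(X_{i_m}) + m - 1$; combining this with $N(F) \supseteq N(v_1)$ gives
\[
\varepsilon(X) = |N(F)| - m \geq d(X_{i_1}) - m \geq d(X_{i_m}) - 1.
\]

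The reverse inequality follows by viewing $\{v_m\}$ as a candidate set for $\varepsilon(X)$: since $F$ is a fragment, $N(F) \neq Y$, hence $N(v_m) \subseteq N(F) \subsetneq Y$, and so $\varepsilon(X) \leq d(X_{i_m}) - 1$. Therefore $\varepsilon(X) = d(X_{i_m}) - 1$. Combining with $\varepsilon(X) \leq d(X_k) - 1$ (from a singleton in $X_k$) and the minimality $d(X_{i_m}) \geq d(X_k)$ forces $d(X_{i_m}) = d(X_k)$, so $\alpha(X,Y) = |Y| - \varepsilon(X) = 1 + |Y| - d(X_k)$, which equals the maximum in the statement by the lower bound from the first paragraph. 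I do not foresee a serious obstacle; the proof rests on a pincer that squeezes $|N(F)|$ between $d(X_{i_1})$ below and $m + d(X_{i_m}) - 1$ above, made tight by the strict integer gaps between orbit degrees. The only subtlety is verifying that $\{v_m\}$ is admissible for $\varepsilon(X)$, which follows immediately from $F$'s own admissibility.
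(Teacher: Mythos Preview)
Your proof is correct and follows essentially the same route as the paper's: both arguments assume without loss of generality that $F\in\mathcal{F}(X)$, use $|N(F)|\ge d(X_{i_1})$, and exploit the strict integer gaps among the orbit degrees to conclude $\varepsilon(X)=d(X_k)-1$. The only cosmetic differences are that you phrase the computation in terms of $\varepsilon(X)$ and pinch via $d(X_{i_m})$, whereas the paper compares $|F|+|\phi(F)|$ directly to $1+|Y|-d(X_k)$ and bounds via $k-i_1$; you also make explicit the check that $d(X_k)<|Y|$, which the paper leaves implicit.
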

	\begin{proof}[\textnormal{\textbf{Proof.}}]
		In order to prove $\alpha(X,Y)=\max \left\{1+|Y|-d(X_{k}), 1+|X|-d(Y_{\ell})\right\}$, we just need to show $\binom{X_{k}}{1}\subseteq \mathcal{F}(X,Y)$ or $\binom{Y_{\ell}}{1}\subseteq \mathcal{F}(X,Y)$.
	 Without loss of generality, we may assume $F\in \mathcal{F}(X)$. 
	 
	 Set $F=\left\{  x_{i_{1}},  \cdots,  x_{i_{p}} \right\}$, where  $i_{1}< i_{2}< \cdots < i_{p}$, $x_{i_{1}}\in X_{i_{1}},  \cdots, x_{i_{p}}\in X_{i_{p}}$.  To prove $\binom{X_{k}}{1}\subseteq \mathcal{F}(X)$, it is sufficient to show $ |F|+|\phi(F)|\leq |\left\{ x\right\}|+|Y\backslash N(x)|$ for any $x\in X_{k}$, i.e.,
		\begin{equation} \tag{$2.1$}
			|F|-|N(F)|\leq 1-d(X_{k}).
		\end{equation}
		Since $d(X_{1})>d(X_{2})>\cdots>d(X_{k})$, we have
		\begin{equation*}
			d(X_{k})\leq d(X_{i_{1}})-k+i_{1}.
		\end{equation*}  
		Combing with $p\leq k-i_{1}+1$ by $\left\{ i_{1}, i_{2}, \cdots , i_{p}\right\}\subseteq \left\{ i_{1}, i_{1}+1, \cdots , k \right\}$, we get   $d(X_{k})-1\leq d(X_{i_{1}})-p$.	This  together with  $\left| N(F)\right|= |N(x_{i_{1}},  \cdots,  x_{i_{p}})|\geq |N(x_{i_{1}})|=d(X_{i_{1}})$ yields ($2.1$).
	\end{proof}	
	\begin{proof}[\textnormal{\textbf{Proof of Theorem \ref{import}}}]
		 Note that $|A|\leq |\phi(A)|$ for some $A\in \mathcal{F}(X,Y)$. Since Lemma \ref{c} and symmetry, there exists a $F\in \mathcal{F}(X,Y)$ such that $|F\cap O|\leq 1$ for any orbit $O$. Then, by  Lemma \ref{b}, equality $\alpha(X,Y)=\max \left\{1+|Y|-d(X_{k}),1+ |X|-d(Y_{\ell})\right\}$ holds.
		
		In the following, suppose that Theorem \ref{import} $(4)$ does not hold.  We divide our proof into the following two cases.
		
		\noindent \textbf{Case 1.} $|X|-d(Y_{\ell})\neq |Y|-d(X_{k})$. 
		
		We may assume $|X|-d(Y_{\ell})> |Y|-d(X_{k})$.
		In this case, it is sufficient to show that there is only singletons in $\mathcal{F}(X)$. Indeed, since $d(X_{1})>d(X_{2})>\cdots>d(X_{k})$, we have  $1+|Y|-d(X_{i})<1+|Y|-d(X_{k})$ for any $i\in[k-1]$, which implies $(\binom{X_{1}}{1}\cup \cdots \cup \binom{X_{k-1}}{1})\cap \mathcal{F}(X)=\emptyset$. 
		Therefore, if $\mathcal{F}(X)$ has only singletons, then $\emptyset\neq \mathcal{F}(X)\subseteq \binom{X_{k}}{1}$. Moreover, since $X_{k}$ is an orbit under the action of $\Gamma$, we have $\binom{X_{k}}{1}\subseteq \mathcal{F}(X)$.  Hence $(1)$ holds
		
		Let $A$ be a maximum-sized fragment in $\mathcal{F}(X)$. Then $B=Y\backslash N(A)$ is a minimum-sized fragment in $\mathcal{F}(Y)$. By $|X|-d(Y_{\ell})<|Y|-d(X_{k})$, we have $|B|>1$. Suppose $|A|>1$.   Since $(4)$ does not hold, there exist $p\in [k]$ and $z\in [\ell]$ such that $|A\cap X_{p}|>1$ and $|B\cap Y_{z}|>1$. The primitivity of $\Gamma\mid _{X_{p}}$ and $\Gamma\mid _{Y_{z}}$ implies that  $\emptyset\neq \gamma(A)\cap A\neq A$ and  $\emptyset\neq \eta(B)\cap B\neq B$ for some $\gamma,   \eta \in \Gamma$.  If $|A|\leq |B|$, then  $\gamma(A)\cup A\in \mathcal{F}(X)$ by Lemma \ref{4}, a contradiction to the fact that $|A|$ is the maximum-sized fragment in $\mathcal{F}(X)$. If $|A|>|B|$, then  $\eta(B)\cap B\in \mathcal{F}(Y)$   by Lemma \ref{4}.  This contradicts the fact that  $|B|$ is the minimum-sized fragment in $\mathcal{F}(Y)$. Hence $|A|=1$, which implies that  $\mathcal{F}(X)$ has only singletons.
		
		Similar arguments apply to $|X|-d(Y_{\ell})> |Y|-d(X_{k})$, we  deduce $(2)$ holds.
		
		\noindent \textbf{Case 2.} $|X|-d(Y_{\ell})=|Y|-d(X_{k})$. 
		
		Note that  $\alpha(X,  Y)=\max \left\{1+|Y|-d(X_{k}),  1+|X|-d(Y_{\ell})\right\}$. This together with  $|X|-d(Y_{\ell})=|Y|-d(X_{k})$ and Lemma \ref{3} \textnormal{(i)} yields $(3)$.
	\end{proof}
	 	Suppose that $G(X,Y)$,  $\Gamma$ are as Theorem \ref{import} and $|X|-d(Y_{\ell})=|Y|-d(X_{k})$.  Then   the fragment not in $\binom{X_{k}}{1}  \cup\binom{Y_{\ell}}{1} \cup \phi \left(\binom{X_{k}}{1}\cup \binom{Y_{\ell}}{1}\right)$ is said to be \textit{nontrivial}.	
	
	\begin{prop}\label{8}
		Let $G(X,Y)$ and $\Gamma$ be as in Theorem \ref{import}. If $|X|-d(Y_{\ell})=|Y|-d(X_{k})$ and there exists nontrivial fragment in $\mathcal{F}(X,Y)$, then the minimum-sized nontrivial fragment $F$ satisfies the following:
		\begin{enumerate}[\normalfont(i)]
			\vspace{-0.2cm}	\item $F$ is not a singleton;
			\vspace{-0.2cm}	\item $|F\cap X_{i}|\leq 1$ for any $i\in[k-1]$ and $|F\cap Y_{j}|\leq 1$ for any $j\in [\ell -1]$. 
		\end{enumerate}	
	\end{prop}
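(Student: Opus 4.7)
I take $F$ to be a minimum-sized nontrivial fragment of $\mathcal{F}(X,Y)$ and, using the $X\leftrightarrow Y$ symmetry, assume $F\in\mathcal{F}(X)$. Since $F\subseteq X$, the assertion $|F\cap Y_j|\le 1$ is automatic, so part (ii) reduces to showing $|F\cap X_i|\le 1$ for every $i\in[k-1]$. For (i), if $F=\{x\}$ is a singleton with $x\in X_i$, then the fragment condition $|F|+|\phi(F)|=\alpha(X,Y)$ reads $1+(|Y|-d(X_i))=1+|Y|-d(X_k)$ by Theorem \ref{import}; hence $d(X_i)=d(X_k)$, and the strict descent $d(X_1)>\cdots>d(X_k)$ forces $i=k$, so $F\in\binom{X_k}{1}$, contradicting the nontriviality of $F$.

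\textbf{Setting up (ii).} Assume for contradiction that $|F\cap X_i|\ge 2$ for some $i\in[k-1]$. A direct check (using $\phi^{-1}=\phi$ together with the explicit description of the trivial set) shows that $\phi$ permutes the nontrivial fragments, so $\phi(F)$ is nontrivial as well, and the minimality of $|F|$ gives $|F|\le|\phi(F)|$. Applying the primitivity of $\Gamma|_{X_i}$ to the proper nonempty subset $F\cap X_i\subsetneq X_i$, I obtain $\gamma\in\Gamma$ with $\emptyset\neq(F\cap X_i)\cap\gamma(F\cap X_i)\neq F\cap X_i$; since $\gamma(X_i)=X_i$, this rewrites as $\emptyset\neq F\cap\gamma(F)\cap X_i\neq F\cap X_i$, and in particular $\emptyset\neq F\cap\gamma(F)\subsetneq F$. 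Lemma \ref{4} (applied with $A=F$) then yields $F\cap\gamma(F)\in\mathcal{F}(X)$, a fragment strictly smaller than $F$.

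\textbf{The main obstacle.} The delicate step, and what I expect to be the hardest, is to rule out that this strictly smaller fragment is trivial; once that is done, it contradicts the minimality of $F$ and finishes (ii). The trivial members of $\mathcal{F}(X)$ are precisely $\binom{X_k}{1}\cup\phi(\binom{Y_\ell}{1})$. If $F\cap\gamma(F)\in\binom{X_k}{1}$, then it is a singleton inside $X_k$ yet meets $X_i$ with $i\neq k$, which is impossible. If instead $F\cap\gamma(F)\in\phi(\binom{Y_\ell}{1})$, then $|F\cap\gamma(F)|=|X|-d(Y_\ell)=\alpha(X,Y)-1$; however (i) applied to both $F$ and $\phi(F)$ gives $|F|,|\phi(F)|\ge 2$, hence $\alpha(X,Y)\ge 4$, and then $\alpha(X,Y)-1>\alpha(X,Y)/2\ge|F|>|F\cap\gamma(F)|$, a contradiction. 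Therefore $F\cap\gamma(F)$ is a nontrivial fragment of size less than $|F|$, closing out the argument.
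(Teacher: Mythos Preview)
Your proof is correct and follows essentially the same route as the paper's: both assume $F\in\mathcal{F}(X)$, use $|F|\le|\phi(F)|$ together with Lemma~\ref{4} to produce the smaller fragment $F\cap\gamma(F)$, and then obtain a contradiction by showing this smaller fragment cannot be trivial via the size dichotomy (trivial fragments have size $1$ or $\alpha(X,Y)-1$). The only cosmetic difference is that the paper first packages this as the general claim $|F\cap\gamma(F)|\in\{0,1,|F|\}$ for every $\gamma$ and then derives the contradiction from the forced singleton landing in $X_p$, whereas you run the case analysis on trivial fragments directly for the specific $\gamma$ coming from primitivity.
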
    
	\begin{proof}[\textnormal{\textbf{Proof.}}]
	  Without loss of generality, we may assume $F\in\mathcal{F}(X)$.  
		
		\textnormal{(i)} Since $F$ is not a singleton of $X_{k}$ and $d(X_{1})>d(X_{2})>\cdots>d(X_{k})$,  we have  $|F|>1$.
		
		\textnormal{(ii)} It is clear $\phi(F)\in\mathcal{F}(X,Y)$ from Lemma \ref{3} \textnormal{(i)}. Note that $F\notin \binom{X_{k}}{1}\cup \binom{Y_{\ell}}{1} \cup \phi \left(\binom{X_{k}}{1}\cup \binom{Y_{\ell}}{1}\right)$ and $\phi^{-1}=\phi$. Then $\phi(F)$ is also a nontrivial fragment. The minimality of $F$ implies $|F|\leq |\phi(F)|$.
		
		Next, we prove that $|F\cap \gamma(F)|\in \lbrace 0,1, |F|\rbrace$ for any $\gamma\in \Gamma$. Otherwise there exists $\gamma \in \Gamma$ such that $1<|\gamma(F)\cap F|<|F|$. By Lemma \ref{4},  we have $\gamma(F)\cap F\in \mathcal{F}(X,Y)$. Moreover, we have $|\gamma(F)\cap F|<|F|\leq |\phi(x_{k})|=|\phi(y_{\ell})|$ for any $x_{k}\in X_{k}, y_{\ell}\in Y_{\ell}$. These imply that $\gamma(F)\cap F$ is a nontrivial fragment, which contradicts the fact that $F$ is the minimum-sized nontrivial fragment.

		It is clear that $|F\cap Y_{j}|=0$ for any $j\in[\ell]$  since $F\subseteq X$. Suppose that   $|F\cap X_{p}|\geq 2$ for some $p\in[k-1]$.   Then there exists $\gamma \in \Gamma$ such that $\emptyset \neq F\cap \gamma(F) \cap X_{p}\neq F \cap X_{p}$. Together with $|F\cap \gamma(F)|\in \lbrace 0,1, |F|\rbrace$, we conclude that  $F\cap \gamma(F)$ is a singleton in $X_{p}$.   According to Lemma \ref{4},  we have $F\cap \gamma(F)\in \mathcal{F}(X)$, a contradiction to the fact  $1+|Y|-d(X_{p})<1+|Y|-d(X_{k})$.
	\end{proof}

	\begin{prop}\label{5}
		Let $G(X,Y)$ and $\Gamma$ be as in Theorem \ref{import}. If $d(X_{k})<d(X_{k-1})-1$ and $d(Y_{\ell})<d(Y_{\ell-1})-1$, then Theorem \ref{import} \textnormal{(iv)} does not hold. 
	\end{prop}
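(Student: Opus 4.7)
The plan is to argue by contradiction: assume Theorem \ref{import}(iv) does hold, so some $F\in\mathcal{F}(X,Y)$ satisfies $|F|\geq 2$ and $|F\cap P|\leq 1$ for every orbit $P$. By the symmetry between $X$ and $Y$ (and the fact that both strict-gap hypotheses are assumed), I may assume $F\in\mathcal{F}(X)$. Write $F=\{x_{i_1},\ldots,x_{i_p}\}$ with $x_{i_j}\in X_{i_j}$ and $i_1<i_2<\cdots<i_p$, where $p=|F|\geq 2$.

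The key step is to revisit the counting done in the proof of Lemma \ref{b} and track when it is tight. Since $F\in\mathcal{F}(X)$ corresponds to a maximum-sized nontrivial independent set,
\begin{equation*}
|F|+|Y|-|N(F)|=\alpha(X,Y)\geq 1+|Y|-d(X_k),
\end{equation*}
so $|N(F)|\leq p-1+d(X_k)$. On the other hand, because all $x_{i_j}$ with $j\geq 2$ can only add to the neighborhood of $x_{i_1}$,
\begin{equation*}
|N(F)|\geq |N(x_{i_1})|=d(X_{i_1}),
\end{equation*}
and since $d(X_1)>d(X_2)>\cdots>d(X_k)$ is strictly decreasing by integer amounts, $d(X_{i_1})\geq d(X_k)+(k-i_1)$. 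Combining these three inequalities gives
\begin{equation*}
d(X_k)+(k-i_1)\leq d(X_{i_1})\leq p-1+d(X_k),
\end{equation*}
so $p\geq k-i_1+1$. But $\{i_1,\ldots,i_p\}\subseteq\{i_1,i_1+1,\ldots,k\}$ also forces $p\leq k-i_1+1$, hence equality throughout.

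Equality forces every strict-decrease step to be exactly $1$: $d(X_j)-d(X_{j+1})=1$ for all $j=i_1,\ldots,k-1$. In particular $d(X_{k-1})=d(X_k)+1$, which directly contradicts the hypothesis $d(X_k)<d(X_{k-1})-1$. The symmetric argument (with $F\in\mathcal{F}(Y)$) uses $d(Y_\ell)<d(Y_{\ell-1})-1$. Thus no such $F$ exists, so Theorem \ref{import}(iv) fails. The whole argument is a short tightness analysis of the same chain of inequalities used in Lemma \ref{b}, so the main (and essentially only) obstacle is correctly identifying which inequality must be strict under the strengthened gap hypothesis; once that is pinned down, the contradiction is immediate.
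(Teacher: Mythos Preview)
Your proof is correct and follows essentially the same approach as the paper. Both arguments hinge on the same chain of inequalities $d(X_k)+(k-i_1)\leq d(X_{i_1})\leq |N(F)|$ together with $p\leq k-i_1+1$; the paper uses the gap hypothesis to make the first inequality strict (yielding $|F|-|N(F)|<1-d(X_k)$ directly), while you phrase it contrapositively by assuming $F\in\mathcal{F}(X)$, forcing equality throughout, and then reading off $d(X_{k-1})-d(X_k)=1$ to contradict the hypothesis.
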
	
	\begin{proof}[\textnormal{\textbf{Proof.}}]
	Suppose that $F$ is a set satisfying $|F|\geq2$ and $|F\cap P|\leq 1$ for any non-singleton orbit $P$ under the action of $\Gamma$. 	Next, we prove such $F$ not in $\mathcal{F}(X,Y)$. 
	We may assume  $F\subseteq X$, the proof of $F\subseteq Y$ is similar. 
	
	Set $F=\left\{  x_{i_{1}},  \cdots,  x_{i_{p}} \right\}$, where  $i_{1}< i_{2}< \cdots < i_{p}$, $x_{i_{1}}\in X_{i_{1}},  \cdots, x_{i_{p}}\in X_{i_{p}} $. If $N(F)=Y$, then $F \notin \mathcal{F}(X,Y)$ by the definition of $\mathcal{F}(X,Y)$. 
		In the following, suppose $N(F)\subsetneq Y$.
		
		  In order to prove $F \notin \mathcal{F}(X,Y)$, we just need to check that $|F|+|Y\backslash N(F)|<|\left\{ x\right\}|+|Y\backslash N(x)|$ where $x\in X_{k}$, i.e., 
		\begin{equation} \tag{$2.2$}
			|F|-|N(F)|<1-d(X_{k}).
		\end{equation}
		
		Since $d(X_{1})>d(X_{2})>\cdots>d(X_{k})$ and $d(X_{k})<d(X_{k-1})-1$, we have
		\begin{equation}
			\tag{2.3}
			d(X_{k})< d(X_{i_{1}})-k+i_{1}.
		\end{equation}  
		
		Note that $\left\{ i_{1}, i_{2}, \cdots , i_{p}\right\}\subseteq \left\{ i_{1}, i_{1}+1, \cdots , k \right\}$, which implies  $p\leq k-i_{1}+1$. Combining with (2.3), we have   $d(X_{k})-1< d(X_{i_{1}})-p$.
		This together with  $\left| N(F)\right|= |N(x_{i_{1}},  \cdots,  x_{i_{p}})|\geq |N(x_{i_{1}})|=d(X_{i_{1}})$ yields ($2.2$).
	\end{proof}

	\section{Proof of Theorem \ref{1}}
		
	Let $n$ and $t$ be positive integers.  Suppose that $R=\left\{ r_{1},   r_{2},   \ldots,   r_{k} \right\}$ and   $S=\left\{  s_{1},  s_{2},  \ldots,  s_{\ell}\right\}$ are subsets of $[n]$ with $r_{1}< r_{2}< \cdots< r_{k}$,   $s_{1}< s_{2}< \cdots< s_{\ell}$, $t\leq \min R\cup S$ and $r_{k}+s_{\ell}-t<n$. 
	Set $X_{i} =\binom{[n]}{r_{i}}$, $Y_{j}=\binom{[n]}{s_{j}}$, $X=X_{1}\cup\cdots \cup X_{k}$ and $Y=Y_{1}\cup\cdots \cup Y_{\ell}.$  
	
	To prove Theorem \ref{1}, we consider a bipartite graph $G_{1}(X,Y)$, whose vertex set is $X\biguplus Y$, and two vertices $x\in X$ and $y\in Y$ are adjacent if and only if $|x\cap y|<t$.  
	
	It is easy to verify that $S_{n}$ is an automorphism group of $G_{1}(X, Y)$ stabilizing $X$. Moreover, $X_{1},\ldots, X_{k}$ and   $Y_{1},\ldots ,Y_{\ell}$ are exactly all orbits under the action of $S_{n}$ on $X\cup Y$. Denote  the induced subgraphs of $G_{1}(X,  Y)$ with vertex sets $X_{i}\cup Y_{j}$ and $X_{i}\cup Y$ by $G_{1}(X_{i},   Y_{j})$ and $G_{1}(X_{i},   Y)$, respectively. 
	
	\begin{prop}\label{6}
		Let $G_{1}(X,  Y)$ be as above. Then the following hold. 
		\begin{enumerate}[\normalfont(i)]
			\vspace{-0.3cm}	\item $G_{1}(X,  Y)$ is  a non-complete bipartite graph.
			\vspace{-0.3cm}	\item  $|Y_{j}\backslash N(x_{i})|\leq |Y_{j}\backslash N(x_{i+1})|-t(s_{j}-t+1)$ for any $i\in [k-1]$, $j\in[\ell]$, $x_{i}\in X_{i}$ and  $x_{i+1}\in X_{i+1}$.  Moreover, $|Y\backslash N(x_{i})|\leq |Y\backslash N(x_{i+1})|-\sum_{j=1}^{\ell}t(s_{j}-t+1)$.  In particular, $|Y\backslash N(x_{i})|\leq |Y\backslash N(x_{i+1})|-2 $ if $s_{\ell}>1$ and $|Y\backslash N(x_{i})|\leq |Y\backslash N(x_{i+1})|-(\ell+1)$  if $\ell>1$.
			\vspace{-0.3cm}	\item  If $r_{i}\geq s_{j}$,   then $|Y_{j}\backslash N(x_{i},  x_{i}^{\prime})|\leq |Y_{j}\backslash N(x_{i})|-1$ for any $x_{i},  
			x_{i}^{\prime}\in X_{i}$.
			\vspace{-0.3cm}	\item If $n=2r_{i}$, then $|Y_{j}\backslash N(x_{i},  \overline{x_{i}})|\leq |Y_{j}\backslash N(x_{i})|-1$ for any $j\in [\ell]$ and $x_{i}\in X_{i}$, where $\overline{x_{i}}=[n]\backslash x_{i}$. Moreover, $|Y_{j}\backslash N(x_{i},  \overline{x_{i}})|\leq |Y_{j}\backslash N(x_{i})|-2$ if $r_{i}\neq s_{j}$ or $r_{i}=s_{j}>t>1$.
		\end{enumerate}
	\end{prop}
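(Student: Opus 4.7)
All four parts follow from counting arguments combined with explicit constructions, exploiting that $S_n$ acts on $G_1$ as an automorphism group preserving each $X_i$ and each $Y_j$; consequently the quantities $|Y_j\setminus N(x)|$, $|Y_j\setminus N(x,x')|$, etc., depend only on the orbits of the arguments, and I will fix convenient representatives throughout.

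For (i) I simply take $x=[r_1]\in X_1$ and $y=[s_1]\in Y_1$: then $|x\cap y|=\min(r_1,s_1)\geq t$, so $x,y$ are non-adjacent. For (ii), by $S_n$-symmetry I may assume $x_i\subset x_{i+1}$, so $Y_j\setminus N(x_i)\subseteq Y_j\setminus N(x_{i+1})$. The difference is bounded below by the contribution of one new element $a\in x_{i+1}\setminus x_i$, namely $s_j$-sets $y$ with $a\in y$ and $|y\cap x_i|=t-1$; this count equals $\binom{r_i}{t-1}\binom{n-r_i-1}{s_j-t}$. I will bound this by $t(s_j-t+1)$ using $\binom{r_i}{t-1}\geq t$ (since $r_i\geq t$) and $\binom{n-r_i-1}{s_j-t}\geq s_j-t+1$ (since $r_i\leq r_k-1$ and $n\geq r_k+s_\ell-t+1$ together imply $n-r_i-1\geq s_j-t+1$). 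Summing over $j$ yields the global inequality for $|Y\setminus N(x_i)|$; the two ``in particular'' consequences are the elementary estimates $t(s_\ell-t+1)\geq 2$ when $s_\ell\geq 2$ and $\sum_{j=1}^\ell t(s_j-t+1)\geq\ell+1$ when $\ell\geq 2$, the latter via the distinctness $s_1<\cdots<s_\ell$.

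For (iii) I construct a witness $y\in(Y_j\setminus N(x_i))\setminus(Y_j\setminus N(x_i,x_i'))$ according to the size $c=|x_i\cap x_i'|$. If $c\geq t-1$, I take $t-1$ elements from $x_i\cap x_i'$, one element from $x_i\setminus x_i'$, and $s_j-t$ further elements from $[n]\setminus x_i'$; feasibility of the last step uses $n>r_i+s_j-t$. If $c\leq t-2$, I take all of $x_i\cap x_i'$ together with $s_j-c$ elements of $x_i\setminus x_i'$, and feasibility uses $r_i\geq s_j$. In either case $|y\cap x_i|\geq t$ and $|y\cap x_i'|\leq t-1$, which gives the claimed strict drop by one.

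For (iv), using $n=2r_i$ and $|y\cap\overline{x_i}|=s_j-|y\cap x_i|$, one has
\[
|Y_j\setminus N(x_i)|-|Y_j\setminus N(x_i,\overline{x_i})|\;=\;\sum_{k=\max(t,\,s_j-t+1)}^{\min(r_i,\,s_j)}\binom{r_i}{k}\binom{r_i}{s_j-k}.
\]
The first step is to extract $s_j\leq r_i+t-1$ from $r_k+s_\ell-t<n=2r_i$ combined with $r_k\geq r_i$; this makes the summation range nonempty and every term nonzero, giving the ``$\geq 1$'' bound. For the ``$\geq 2$'' strengthening I split into the three subcases $r_i>s_j$ (the single term $\binom{r_i}{s_j}\geq r_i-s_j+1\geq 2$), $r_i<s_j$ (where $r_i+1\leq s_j\leq r_i+t-1$ forces $t\geq 2$, and then the range contains a term of size $\geq 2$), and $r_i=s_j>t>1$ (where the range has length $\geq 2$ and each term is nonzero). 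The main obstacle is precisely this case analysis in (iv), since the positions of $\max(t,s_j-t+1)$ and $\min(r_i,s_j)$ interact with several edge values of $t,r_i,s_j$; once the clean consequence $s_j\leq r_i+t-1$ is in hand, the individual binomial estimates are routine.
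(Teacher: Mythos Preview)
Your proposal is correct and follows the same approach as the paper (exploiting $S_n$-transitivity to fix convenient representatives and then producing explicit witness sets in each part). The only minor variations are in (iii), where your two-case split at $c=t-1$ streamlines the paper's three-way split, and in (iv), where you bound the closed-form difference $\sum_{k}\binom{r_i}{k}\binom{r_i}{s_j-k}$ directly while the paper instead exhibits witness families $\mathcal{C}_1,\mathcal{C}_2,\mathcal{C}_3$ in the separate subcases.
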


	\begin{proof}[\textnormal{\textbf{Proof.}}]
		\textnormal{(i)} Observe that $[r_{1}]\in X_{1}$ and  $[s_{1}]\in Y_{1}$ are not adjacent.  Hence $G_{1}(X,  Y)$ is not a complete bipartite graph.
		
		\textnormal{(ii)} Since $X_{i}$ and $X_{i+1}$ are two orbits under the action of $S_{n}$, we may assume that $x_{i}=[r_{i}]$ and   $x_{i+1}=[r_{i+1}]$.  Then $Y_{j}\backslash N(x_{i})\subseteq Y_{j}\backslash N(x_{i+1}) $. 	
		Consider the following non-empty set
		$$\mathcal{D}=\left\{ z_{1}\cup z_{2} \cup z_{3} :z_{1}\in \binom{[r_{i}]}{t-1},   z_{2}\in \binom{[r_{i+1}]\backslash [r_{i}]}{1},   z_{3}\in \binom{[n]\backslash [r_{i+1}]}{s_{j}-t}\right\}. $$
		It is easy to check that 
		$\mathcal{D}\subseteq (Y_{j}\backslash N(x_{i+1}))\backslash (Y_{j}\backslash N(x_{i}))$ and $|\mathcal{D}|\geq t(s_{j}-t+1)$. Hence $|Y_{j}\backslash N(x_{i})|\leq |Y_{j}\backslash N(x_{i+1})|- t(s_{j}-t+1)$.	Moreover, we have
		\begin{equation}\tag{3.1}
			\begin{aligned}
				|Y\backslash N(x_{i})|=&\sum_{j=1}^{\ell}|Y_{j}\backslash N(x_{i})|
				\leq \sum_{j=1}^{\ell}|Y_{j}\backslash N(x_{i+1})|-t(s_{j}-t+1)\\
				=&|Y\backslash N(x_{i+1})|-\sum_{j=1}^{\ell}t(s_{j}-t+1).
			\end{aligned}
		\end{equation}
		
		If $s_{\ell}>1$,by $s_{\ell}\geq t$, then $t(s_{\ell}-t+1)\geq 2 $. Therefore, we have $|Y\backslash N(x_{i})|\leq |Y\backslash N(x_{i+1})|-2 $ by (3.1). If $\ell> 1$, then $s_{\ell}>1$, which implies $t(s_{\ell}-t+1)\geq 2 $. Combing with $t(s_{j}-t+1)\geq 1$ for any $j\in[\ell-1]$, we have $|Y\backslash N(x_{i})|\leq |Y\backslash N(x_{i+1})|-(\ell+1)$ by (3.1).

		\textnormal{(iii)}  Note that $Y_{j}\backslash N(x_{i},  x_{i}^{\prime}) \subseteq Y_{j}\backslash N(x_{i})$.  Then it is sufficient to prove  $Y_{j}\backslash N(x_{i},  x_{i}^{\prime}) \subsetneq Y_{j}\backslash N(x_{i})$.  
		
		If $|x_{i}\cap x_{i}^{\prime}|<t$, by $r_{i}\geq s_{j}$, then $\binom{x_{i}}{s_{j}} \subseteq ( Y_{j}\backslash N(x_{i}))$$\backslash (Y_{j}\backslash N(x_{i},  x_{i}^{\prime}))$. If $|x_{i}\cap x_{i}^{\prime}|\geq t$ and $r_{i}-|x_{i}\cap x_{i}^{\prime}|\geq s_{j}-t+1$,  consider the following non-empty set
		$$\mathcal{G}=\left\{ z_{1}\cup z_{2} :z_{1}\in \binom{x_{i}\cap x_{i}^{\prime}}{t-1},    z_{2}\in \binom{x_{i}\backslash x_{i}^{\prime}}{s_{j}-t+1}\right\}. $$
		It is easy to verify that $\mathcal{G}\subseteq(Y_{j}\backslash N(x_{i}))  \backslash (Y_{j}\backslash N(x_{i}, x_{i}^{\prime}))$. 
		If $|x_{i}\cap x_{i}^{\prime}|\geq t$ and $r_{i}-|x_{i}\cap x_{i}^{\prime}|<s_{j}-t+1$,  consider the following non-empty set
		$$\mathcal{H}=\left\{ z_{1}\cup z_{2} \cup x_{i}\backslash x_{i}^{\prime} :z_{1}\in \binom{x_{i}\cap x_{i}^{\prime}}{t-1},    z_{2}\in \binom{[n]\backslash (x_{i}\cup x_{i}^{\prime})}{s_{j}-|x_{i}\backslash x_{i}^{\prime}|-t+1}\right\}. $$
		We  also verify that $\mathcal{H}\subseteq(Y_{j}\backslash N(x_{i}))  \backslash (Y_{j}\backslash N(x_{i}, x_{i}^{\prime}))$. 
		
		\textnormal{(iv)} Note that $Y_{j}\backslash N(x_{i},  \overline{x_{i}})\subseteq Y_{j}\backslash N(x_{i})$. Then 
		$$|Y_{j}\backslash N(x_{i})|-|Y_{j}\backslash N(x_{i},  \overline{x_{i}})|=\left|  \left(Y_{j}\backslash N(x_{i}) \right)    \right\backslash \left(Y_{j}\backslash N(x_{i},  \overline{x_{i}}) \right)|.$$ 
		If $s_{j}=r_{i}$,  then $x_{i}\in (Y_{j}\backslash N(x_{i}))\backslash (Y_{j}\backslash N(x_{i},   \overline{x_{i}}))$. Thus $|Y_{j}\backslash N(x_{i},  \overline{x_{i}})|\leq |Y_{j}\backslash N(x_{i})|-1$. 
		If $s_{j}=r_{i}>t>1$,  then consider $$\mathcal{C}_{1}=\left\{ C\cup \left\{ z \right\}: C\in \binom{x_{i}}{r_{i}-1},   z\in \overline{x_{i}} \right\}\subseteq (Y_{j}\backslash N(x_{i}))\backslash (Y_{j}\backslash N(x_{i},   \overline{x_{i}})).$$      
		If $r_{i}>s_{j}$, then consider $$\mathcal{C}_{2}=\binom{x_{i}}{s_{j}}\subseteq (Y_{j}\backslash N(x_{i}))\backslash (Y_{j}\backslash N(x_{i},   \overline{x_{i}})).$$ 
		If $r_{i}<s_{j}$,  then consider
		$$\mathcal{C}_{3}=\left\{ x_{i}\cup \binom{\overline{x_{i}}}{s_{j}-r_{i}}\right\}\subseteq (Y_{j}\backslash N(x_{i}))\backslash (Y_{j}\backslash N(x_{i},   \overline{x_{i}})).$$ 
		It is clear that $|\mathcal{C}_{1}|, |\mathcal{C}_{2}|, |\mathcal{C}_{3}|\geq 2$. The desired result follows.
	\end{proof}   

		Note that 
		$|Y\backslash N(x_{i})|=|Y|-d(X_{i})$ for any  $i\in [k]$ and $x_{i}\in X_{i}$. By $|Y\backslash N(x_{i})|\leq |Y\backslash N(x_{i+1})|-\sum_{j=1}^{\ell}t(s_{j}-t+1) $, we have $d(X_{i+1})\leq d(X_{i})-\sum_{j=1}^{\ell}t(s_{j}-t+1)$ for any $i\in[k-1]$. Then  $d(X_{1})> d(X_{2})>\cdots >d(X_{k})$. Furthermore, we have $d(X_{k})< d(X_{k-1})-1 $ if $s_{\ell}>1$.

		By computation, we have
		\begin{equation*}
			\begin{aligned}
				|Y|-d(X_{k})=&\sum_{j=t}^{s_{1}}\binom{r_{k}}{j}\binom{n-r_{k}}{s_{1}-j}+\cdots+\sum_{j=t}^{s_{\ell}}\binom{r_{k}}{j}\binom{n-r_{k}}{s_{\ell}-j},
			\end{aligned}
		\end{equation*}
		\begin{equation*}
			\begin{aligned}
				|X|-d(Y_{\ell})=&\sum_{j=t}^{r_{1}}\binom{s_{\ell}}{j}\binom{n-s_{\ell}}{r_{1}-j}+\cdots+\sum_{j=t}^{r_{k}}\binom{s_{\ell}}{j}\binom{n-s_{\ell}}{r_{k}-j}.
			\end{aligned}
		\end{equation*}
So, in order to prove inequality $(1.1)$, we just need to prove $\alpha(X,Y)=\max \lbrace 1+|Y|-d(X_{k}), 1+|X|-d(Y_{\ell})
 \rbrace$.
 	\begin{proof}[\textnormal{\textbf{Proof of inequality $(1.1)$.}}]
		If $n=2$, then $r_{k}=s_{\ell}=t=1$ by $n>r_{k}+s_{\ell}-t$. It is clear that $(1.1)$ holds.	Next, we may assume $n\geq 3$.
		
		By Proposition \ref{6} \textnormal{(ii)} and symmetry,  both $d(X_{1})>d(X_{2})>\cdots>d(X_{k})$ and $d(Y_{1})>d(Y_{2})>\cdots>d(Y_{\ell})$ hold. For each $i\in [k]$, in order to prove that $X_{i}$ is not contained in any nontrivial independent set, we just need to check that $N(X_{i})=Y$.  For any $z\in[\ell]$, since $r_{i}+s_{z}-t<n$, we know that $[s_{z}]\in Y_{z}$ is adjacent to $$x_{i}=\left\{ 1,2,\cdots,t-1, s_{z}+1,\cdots, s_{z}+r_{i}-(t-1) \right\}\in X_{i}.$$   
		Hence $[s_{z}]\in N(X_{i})$.   
	    For any $y\in Y_{z}$, there exists $\gamma \in S_{n}$ such that $y=\gamma([s_{z}])$ since $Y_{z}$ is an orbit under the action of $S_{n}$. Then $y$ is adjacent to $\gamma(x_{i})\in X_{i}$. Consequently $Y_{z}\subseteq N(X_{i})$.  Since $z$ is arbitrary, we have $N(X_{i})=Y$. Similarly, for each $j\in [\ell]$, $Y_{j}$ is not contained in any nontrivial independent set.  	
		
		If $n\neq 2d$, it is well known that for each $D\in\binom{n}{d}$,  the stabilizer of $D$ is a maximal subgroup of $S_{n}$ \cite{MR2265507}.
		Thus $S_{n}\mid_{X_{i}}$,  $S_{n}\mid_{Y_{j}}$ are primitive if  $n\neq 2r_{i}$  for any $i\in[k]$ and $ n\neq 2s_{j}$  for any  $   j\in[\ell]$.  By Theorem \ref{import},  equality $\alpha(X,Y)=\max \lbrace 1+|Y|-d(X_{k}), 1+|X|-d(Y_{\ell})
		\rbrace$ holds. 
		
		Now  assume $n=2r_{p}$ for some $p\in [k]$ or $n=2s_{z}$ for some $z\in[\ell]$.    Note that there exists  $A\in \mathcal{F}(X,Y)$ such that $|A|\leq |\phi(A)|$. If $A\in\mathcal{F}(X)$, $n\neq 2r_{i}$ for any $i\in[k]$ or $A\in\mathcal{F}(Y)$, $n\neq 2s_{j}$ for any $j\in[\ell]$, then the desired result holds by Lemmas \ref{c} and  \ref{b}.  
		
		Next suppose $A\in \mathcal{F}(X)$, $n=2r_{p}$ for some $p\in [k]$ or $A\in \mathcal{F}(Y)$, $n=2s_{z}$ for some $z\in [\ell]$. Without loss of generality, we consider the case $A\in \mathcal{F}(X)$ and $n=2r_{p}$ for some $p\in [k]$. Then the only imprimitive sets of $S_{n}\mid_{X_{p}}$ are all pairs of complementary subsets. If $|A\cap X_{j}|\geq 2$ for some $j\neq p$. We conclude  $|A\cap X_{j}|<|X_{j}|$ from $X_{j}\nsubseteq A$. Since $S_{n}\mid _{X_{j}}$ is primitive, there exists $\gamma\in S_{n}$ such that $\emptyset \neq A\cap \gamma(A)\neq A$.  By Lemma \ref{4},  we get $A\cap \gamma(A)\in \mathcal{F}(X)$. 	Note that $|A\cap \gamma(A)|\leq |A|\leq |\phi(A)|\leq |\phi(A\cap \gamma(A))|$. Then we can use $A\cap \gamma(A)$ to replace $A$.  If $A\cap X_{p}\nsubseteq \left\{ x_{p},  \overline{x_{p}} \right\}$ for any $x_{p}\in X_{p}$. Since $A\cap X_{p}$ is not an imprimitive set in $X_{p}$, there exists $\eta\in S_{n}$ such that $\emptyset \neq A\cap \eta(A)\neq A$. Similarly, we can use $A\cap \eta(A)$ to replace $A$. By above discussion, we may assume that $|A\cap X_{i}|\leq 1$ for any   $i\neq p$ and $A\cap X_{p}\subseteq \left\{ x_{p},  \overline{x_{p}} \right\}$  for some $x_{p}\in X_{p}$.  Next, we divide our proof into the following cases.
		
		\noindent\textbf{Case 1.} $(k,\ell)=(1,1).$
		
		If $|A|=|A\cap X_{1}|=1$, then the there is nothing to prove. If $A=A\cap X_{1}=\lbrace x, \overline{x} \rbrace$, then $\alpha(X,Y)=1+|Y|-d(X_{1})$ also holds by the following Claim.
		\begin{claim}\label{a}
			
			If $k=\ell=1$,   $n=2r_{1}\geq 4$,   then $\left\{ x,  \overline{x} \right\}\in \mathcal{F}(X)$ if and only if $s_{1}=r_{1}$ and $t=1$. Moreover, if $\lbrace x, \overline{x} \rbrace\in \mathcal{F}(X)$, then $\left\{ x\right\}\in \mathcal{F}(X)$.  
		\end{claim}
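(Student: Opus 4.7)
The plan is to compare the independent sets built from the singleton $\{x\}$ and the pair $\{x, \overline{x}\}$, force equality via Proposition \ref{6}(iv) to pin down the parameter conditions, and then verify the sufficiency directly by observing that $G_{1}(X, Y)$ collapses to a perfect matching in the surviving case.

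For the forward direction I would assume $\{x, \overline{x}\} \in \mathcal{F}(X)$, so $Y \setminus N(\{x, \overline{x}\}) \neq \emptyset$ and $\alpha(X,Y) = 2 + |Y \setminus N(\{x, \overline{x}\})|$. Since $Y \setminus N(\{x\}) \supseteq Y \setminus N(\{x, \overline{x}\})$ is also non-empty, the set $\{x\} \cup (Y \setminus N(\{x\}))$ is a nontrivial independent set of size $1 + |Y \setminus N(\{x\})|$, which cannot exceed $\alpha(X, Y)$. Rearranging yields
$$|Y \setminus N(\{x\})| - |Y \setminus N(\{x, \overline{x}\})| \leq 1.$$
By Proposition \ref{6}(iv) this forces $r_{1} = s_{1}$ together with the negation of $r_{1} = s_{1} > t > 1$, leaving either $t = 1$ or $r_{1} = s_{1} = t$. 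The second branch is ruled out because the only $s_{1}$-subset $y$ with $|y \cap x| \geq r_{1}$ is $x$ itself, so $Y \setminus N(\{x, \overline{x}\}) = \emptyset$, contradicting the nontriviality of the fragment. Hence $s_{1} = r_{1}$ and $t = 1$.

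For the backward direction and the ``moreover'' clause I would set $s_{1} = r_{1}$, $t = 1$; then two $r_{1}$-subsets of $[2r_{1}]$ are adjacent in $G_{1}(X, Y)$ if and only if they are disjoint, i.e.\ complementary, so $G_{1}(X, Y)$ is a perfect matching. Because $r_{1} \geq 2$ forces $|X| = |Y| = \binom{2r_{1}}{r_{1}} \geq 6$, the maximum nontrivial independent set has size $\binom{2r_{1}}{r_{1}}$, and both
$$\{x\} \cup (Y \setminus \{\overline{x}\}) \quad \text{and} \quad \{x, \overline{x}\} \cup (Y \setminus \{x, \overline{x}\})$$
attain this value. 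This shows $\{x\}, \{x, \overline{x}\} \in \mathcal{F}(X)$, yielding both sufficiency and the ``moreover'' clause.

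The only subtle point is that Proposition \ref{6}(iv) by itself also leaves open the exceptional case $r_{1} = s_{1} = t$; eliminating it via the non-emptiness requirement on $Y \setminus N(\{x, \overline{x}\})$ is the crux. Once that case is discarded, the rest of the argument is a direct inspection of the resulting perfect-matching graph.
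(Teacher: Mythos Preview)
Your proof is correct and follows essentially the same approach as the paper's: both rule out $s_1=r_1=t$ via the emptiness of $Y\setminus N(\{x,\overline{x}\})$, use the $-2$ bound of Proposition~\ref{6}(iv) to eliminate all remaining cases except $s_1=r_1$ with $t=1$, and then verify sufficiency (and the ``moreover'' clause) directly from the structure of $G_1(X,Y)$. Your backward direction is more explicit than the paper's---you spell out that $G_1(X,Y)$ is a perfect matching and compute $\alpha(X,Y)=\binom{2r_1}{r_1}$, whereas the paper simply says ``it is easy to check by the structure of $G_1(X,Y)$''---but the underlying argument is identical.
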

		
		\begin{proof}[\textnormal{\textbf{Proof.}}]
			If $s_{1}=r_{1}=t$,  then $\phi(x,  \overline{x})=\emptyset$. Hence $\left\{ x,  \overline{x} \right\}\notin \mathcal{F}(X)$. 
			If $s_{1}=r_{1}>t$ and $t=1$, it is easy to check that $\left\{ x,  \overline{x}\right\}$ and $\left\{ x\right\}$ are also in $\mathcal{F}(X)$ by the structure of $G_{1}(X,Y)$. 
			
			On the other hand, we have $|Y\backslash N( x,  \overline{x})|\leq |Y\backslash N( x)|-2$ by Proposition \ref{6} \textnormal{(iv)}. Then 
			$$|\left\{ x,  \overline{x} \right\} |+|Y\backslash N( x,  \overline{x})| < |\left\{ x\right\}|+|Y\backslash N(x)|\leq \alpha(X,Y),$$
			which implies $\left\{ x,  \overline{x} \right\}\notin \mathcal{F}(X)$. The desired result follows.
		\end{proof}

		\noindent\textbf{Case 2.} $(k,\ell)\neq (1,1)$. 
		
		By Lemma \ref{b}, to prove (1.1), it is sufficient to show $|A\cap X_{p}|\leq 1$. Suppose for contradiction that $A\cap X_{p}=\lbrace x_{p},\overline{x_{p}}\rbrace$ for some $x_{p}\in X_{p}$.
		
		\noindent\textbf{Case 2.1.}
		$\ell>1$, $|A|=2$. 
		
		In this case, set $A=\lbrace x_{p}, \overline{x_{p}}\rbrace$.
		By Proposition \ref{6} \textnormal{(iv)},  we have $$|Y_{\ell}\backslash N(x_{p})|-|Y_{\ell}\backslash N(x_{p},   \overline{x_{p}})|\geq 1,\quad  |Y_{\ell-1}\backslash N(x_{p})|-|Y_{\ell-1} \backslash N(x_{p},   \overline{x_{p}})|\geq 1.$$ 
		Thus $|Y\backslash N(x_{p})|-|Y\backslash N(x_{p},   \overline{x_{p}})|\geq 2$, implying $|\left\{ x_{p},  \overline{x_{p}} \right\} |+|\phi( x_{p},  \overline{x_{p}})| < |\left\{ x_{p}\right\}|+|Y\backslash N(x_{p})|$, a contradiction to the fact   $A=\left\{ x_{p},  \overline{x_{p}} \right\}\in \mathcal{F}(X)$.

		\noindent\textbf{Case 2.2.} $\ell>1$, $|A|\geq 3$.  
		
		Let $i_{1}=\min \left\{i: A\cap X_{i}\neq \emptyset \right\}$ and $x_{i_1}\in A\cap X_{i_{1}}$. For any $m\in[k-i_{1}]$, choose some $z_{i_{1}+m}\in X_{i_{1}+m}$. By Proposition \ref{6} \textnormal{(ii)},  we have 
		\begin{equation*}
			\begin{aligned}
				|Y\backslash N(x_{i_{1}} )\leq
				|Y\backslash N(z_{i_{1}+1} )|-(\ell+1).
			\end{aligned}
		\end{equation*}
		Repeating this process successively for $(i_{1}+1,i_{1}+2), \ldots,(k-1, k)$, we get
		\begin{equation*}\tag{$3.2$}
			\begin{aligned}
				|Y\backslash N(x_{i_{1}} )\leq
				|Y\backslash N(z_{k} )|-(k-i_{1})(\ell+1).
			\end{aligned}
		\end{equation*}
		
		Since $\left\{i: A\cap X_{i}\neq \emptyset \right\}\subseteq \left\{ k, k-1, \ldots, i_{1}+1 ,i_{1}\right\}$ and $\left| \left\{i: A\cap X_{i}\neq \emptyset \right\}\right|=|A|-1$, we have $|A|-1\leq k-i_{1}+1$. This together with $(3.2)$ yields  $$|A|+|\phi(A)|\leq |A|+|Y\backslash N(x_{i_{1}})| \leq  |A|+|Y\backslash N(z_{k})|-(|A|-2)(\ell+1)< 1+|Y\backslash N(z_{k})|,$$
		which contradicts to $A\in \mathcal{F}(X)$.  
		
		\noindent	\textbf{Case 2.3.} $k>1, \ell=1$  and $  s_{\ell}=t$. 
		
		In this case, we have $N(A)=Y$,   a contradiction to the fact $A \in \mathcal{F}(X)$.

		\noindent	\textbf{Case 2.4.} $k>1,   \ell=1$ and $ s_{\ell}>t$. 
		
		Let $i_{1}=\min \left\{i: A\cap X_{i}\neq \emptyset \right\}$ and $x_{i_1}\in A\cap X_{i_{1}}$. For any $m\in[k-i_{1}]$, choose some $z_{i_{1}+m}\in X_{i_{1}+m}$. By Proposition \ref{6} \textnormal{(ii)},  we have 
		\begin{equation*}
			\begin{aligned}
				|Y\backslash N(x_{i_{1}} )\leq
				|Y\backslash N(z_{i_{1}+1} )|-2.
			\end{aligned}
		\end{equation*}
		Repeating this process successively for $(i_{1}+1,i_{1}+2), \ldots,(k-1, k)$, we get
		\begin{equation*}\tag{$3.3$}
			\begin{aligned}
				|Y\backslash N(x_{i_{1}} )\leq
				|Y\backslash N(z_{k} )|-2(k-i_{1}).
			\end{aligned}
		\end{equation*}
		
		Since $\left\{i: A\cap X_{i}\neq \emptyset \right\}\subseteq \left\{ k, k-1, \ldots, i_{1}+1 ,i_{1}\right\}$ and $\left| \left\{i: A\cap X_{i}\neq \emptyset \right\}\right|=|A|-1$, we have $|A|-1\leq k-i_{1}+1$.  This together with $(3.3)$ yields  $$|A|+|\phi(A)|\leq |A|+|Y\backslash N(z_{k})|-2(|A|-2),$$ which implies that  one of $|A|=2$ and $|A|=3$ holds. Otherwise $|A|+|\phi(A)|< 1+|Y\backslash  N(z_{k})|$.  
		
		\noindent\textbf{Case 2.4.1.} $|A|=2$.
		
		In this case $A=\left\{ x_{p} ,\overline{x_{p}} \right\}$. Then $|A\cap X_{p}|+|\phi(A)|=\alpha(X_{p},  Y)$.    Claim \ref{a} clearly forces  $s_{1}=r_{p}$ and $t=1$. Combining with $k>1$, we have $$|A|+|\phi(A)|=|\lbrace x_{p} \rbrace|+|Y\backslash N(x_{p})|=|\lbrace y\rbrace|+|X_{p}\backslash N(y)|<1+|X\backslash N(y)|,$$  where $y\in Y$.  This contradicts  the fact that $A\in \mathcal{F}(X)$.

		\noindent\textbf{Case 2.4.2.} $|A|=3.$
		
		Set $A\cap X_{q}=\left\{ x_{q}\right\}$ and $i=\min\lbrace p,q \rbrace$. It is clear that $i\leq k-1$.  By  $(3.3)$, we have $$|A|+|\phi(A)|\leq |A| +|Y\backslash N(z_{k})|-2(k-i)=3+|Y\backslash N(z_{k})|-2(k-i).$$
		Hence $i=k-1$, implying that $\left\{ p,   q \right\}=\left\{  k-1,  k   \right\}$.	If $p=k-1$, by Proposition \ref{6} \textnormal{(ii)} and \textnormal{(iv)}, we have
		\begin{equation*}
			\begin{aligned}
				|A|+|\phi(A)|&\leq |A|+|Y\backslash N(x_{p},   \overline{x_{p}})|
				\leq |A|+|Y\backslash N(x_{p})|-1\\&\leq |A|+|Y\backslash N(x_{q})|-3
				<1+ |Y\backslash N(x_{q})|, 
			\end{aligned}
		\end{equation*}
		a contradiction to the fact $A\in \mathcal{F}(X)$.  Thus $p=k$.
		
		If  $t\geq 2$, by Proposition \ref{6} \textnormal{(ii)},  we have  $$|\phi(A)|\leq |Y\backslash N(x_{q})|\leq |Y\backslash N(x_{p})|-t(s_{l}-t+1)\leq |Y\backslash N(x_{p})|-4.$$ If  $t= 1$ and  $s_{\ell}\geq 3$, by Proposition \ref{6} \textnormal{(ii)},  we have $$|\phi(A)|\leq |Y\backslash N(x_{q})|\leq |Y\backslash N(x_{p})|-t(s_{l}-t+1)\leq |Y\backslash N(x_{p})|-3.$$ If $t= 1$ and $s_{\ell}=2$,  then $|Y_{\ell}\backslash N(x_{p}, \overline{x_{p}})|=r_{k}^{2}$. Therefore, we have
		\begin{equation*}
			\begin{aligned}
				|A|+|\phi(A)|\leq 3+r_{k}^{2}
				<1+2\binom{2r_{k}-2}{r_{k-1}-1}+\binom{2r_{k}-2}{r_{k}-2}+2\binom{2r_{k}-2}{r_{k}-1}
				\leq 1+ |X\backslash N(y)|,  
			\end{aligned}
		\end{equation*}
		where $y\in Y$. In the last three cases, we have $|A|+|\phi(A)|<\alpha(X,Y)$, a contradiction.
	\end{proof}	
	 To prove Theorem \ref{1}  $(1)$, we need the following theorem.
	 \begin{thm}\textnormal{(\cite{MR2971702})}\label{ref1}
	 	Let $n$, $r$, $s$ and $t$ be positive integers with $n\geq 4$, $r,s \geq 2$, $t<\min\lbrace r,s \rbrace$, $r+s-t<n$, $(n,t)\neq (r+s,1)$  and $\binom{n}{r}\leq \binom{n}{s}$.  If ${A}\subseteq \binom{\left[n\right]}{r}$ and $\mathcal{B}\subseteq \binom{\left[n\right]}{s}$ are non-empty cross $t$-intersecting, then 
	 	$$\left|\mathcal{A}\right|+\left|\mathcal{B}\right|\le 	1+\binom{n}{s}-\sum_{i=0}^{t-1}\binom{r}{i}\binom{n-r}{s-i}.$$ 
	 	Moreover equality holds if and only if one of the following holds:
	 	\begin{enumerate}[\normalfont(i)]
	 		\vspace{-0.2cm}	\item $\mathcal{A}=\left\{ A \right\}$ and $\mathcal{B}=\left\{ B\in \binom{[n]}{s}: |B\cap A|\geq t\right\}$ for some $A\in\binom{[n]}{r}$;
	 		\vspace{-0.2cm}	\item $\binom{n}{r}=\binom{n}{s}$ and  $\mathcal{B}=\left\{ B \right\}$ and $\mathcal{A}=\left\{ A\in \binom{[n]}{r}: |A\cap B|\geq t\right\}$ for some $B\in \binom{[n]}{s}$;
	 		\vspace{-0.2cm}	\item $(r,s,t)=(2,2,1)$ and $\mathcal{A}=\mathcal{B}=\left\{  
	 		C\in \binom{[n]}{ 2}: i\in C\right\}$ for some $i\in [n]$;
	 		\vspace{-0.2cm}	\item $(r,s,t)=(n-2,n-2,n-3)$ and $\mathcal{A}=\mathcal{B}=\binom{A}{n-2}$ for some $A\in \binom{[n]}{n-1}$.
	 	\end{enumerate}
	 \end{thm}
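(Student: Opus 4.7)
The plan is to specialize the bipartite-graph framework of Section 2 to the uniform setting $R=\{r\}$, $S=\{s\}$. Take $X=\binom{[n]}{r}$, $Y=\binom{[n]}{s}$ and form the graph $G_{1}(X,Y)$ with $A\sim B$ iff $|A\cap B|<t$; non-empty cross $t$-intersecting pairs are exactly the nontrivial independent sets. The action of $S_{n}$ makes $X$ and $Y$ each a single orbit, so in the notation of Theorem \ref{import} we are in the case $k=\ell=1$, and the hypothesis $r+s-t<n$ gives $N(X)=Y$ and $N(Y)=X$ as in the proof of $(1.1)$.

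For the upper bound I would run the proof of $(1.1)$ in the uniform case. In the generic subcase $n\neq 2r$ and $n\neq 2s$, both $S_{n}|_{X}$ and $S_{n}|_{Y}$ are primitive by the maximality of non-halving stabilizers in $S_{n}$, and Theorem \ref{import} applies. Alternative (iv) is impossible since $k=\ell=1$ forces $|F\cap X_{1}|\leq 1$ or $|F\cap Y_{1}|\leq 1$ on any $|F|\geq 2$, so
\[
\alpha(X,Y)=\max\{1+|Y|-d(X),\,1+|X|-d(Y)\}.
\]
The double count $\binom{n}{r}(|Y|-d(X))=\binom{n}{s}(|X|-d(Y))$ (both equal the number of pairs $(A,B)\in X\times Y$ with $|A\cap B|\geq t$) together with $\binom{n}{r}\leq\binom{n}{s}$ then yields $|Y|-d(X)\geq |X|-d(Y)$ and the stated bound $1+\binom{n}{s}-\sum_{i=0}^{t-1}\binom{r}{i}\binom{n-r}{s-i}$. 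When $n=2r$ or $n=2s$ the imprimitivity is generated by complementary pairs, and I would follow Case 2 of the proof of $(1.1)$: use Lemma \ref{4} to reduce any candidate minimum fragment to one inside some $\{A,\overline{A}\}$, then kill this possibility via Proposition \ref{6}(iv) and Claim \ref{a}.

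The extremal classification is the harder part, because in the tied regime (alternative (iii) of Theorem \ref{import}, which forces $\binom{n}{r}=\binom{n}{s}$) the framework only guarantees that $\mathcal{F}(X,Y)$ contains singletons and their $\phi$-images; non-singleton fragments are not automatically ruled out. Singleton fragments in $\mathcal{F}(X)$ or $\mathcal{F}(Y)$ yield cases (i) and (ii) of Theorem \ref{ref1} directly. The subtle cases are (iii) and (iv): for $(r,s,t)=(2,2,1)$ any star $\{C\in\binom{[n]}{2}:i\in C\}$ is a fragment of size $n-1$, and for $(r,s,t)=(n-2,n-2,n-3)$ the family $\binom{A_{0}}{n-2}$ with $A_{0}\in\binom{[n]}{n-1}$ is a self-$\phi$ fragment of size $n-1$; in both situations $|F|+|\phi(F)|=2(n-1)$ matches the bound. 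The main obstacle is ruling out every other non-singleton fragment in the tied regime: Proposition \ref{8}(ii) is vacuous when $k=\ell=1$, so I would take a minimum non-singleton fragment $F\in\mathcal{F}(X)$, use Lemma \ref{4} to establish $|F\cap\gamma(F)|\in\{0,1,|F|\}$ for every $\gamma\in S_{n}$, and then combine this strong orbit-block property with the precise equation $|F|+|\phi(F)|=2(n-1)$ to force $F$ to be either a star (yielding case (iii)) or the configuration $\binom{A_{0}}{n-2}$ (yielding case (iv)). The auxiliary hypothesis $(n,t)\neq(r+s,1)$ is inserted precisely to cut off one further degenerate symmetric family that would otherwise survive this analysis.
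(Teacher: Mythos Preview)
The paper does not prove Theorem~\ref{ref1}: it is quoted verbatim from Wang--Zhang \cite{MR2971702} and used as a black box inside the proof of Theorem~\ref{1}(1). So there is no ``paper's own proof'' to compare against; what you are proposing is essentially a reconstruction of the original Wang--Zhang argument through the (generalized) bipartite framework of Section~2, specialized back to $k=\ell=1$.

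Your plan for the upper bound is sound and matches what Wang--Zhang do. With $k=\ell=1$, Theorem~\ref{import}(iv) is indeed vacuous, the primitivity/imprimitivity dichotomy on $S_n\mid_X$ is handled as you describe, and the double count forcing $|Y|-d(X)\ge |X|-d(Y)$ from $\binom{n}{r}\le\binom{n}{s}$ is the standard way to pick the correct side of the maximum.

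The gap is in the extremal classification. You correctly isolate the obstacle: in the tied regime $\binom{n}{r}=\binom{n}{s}$ one must rule out \emph{all} nontrivial fragments except those producing (iii) and (iv). Establishing $|F\cap\gamma(F)|\in\{0,1,|F|\}$ for a minimum nontrivial fragment via Lemma~\ref{4} is fine, but the sentence ``combine this strong orbit-block property with the precise equation $|F|+|\phi(F)|=2(n-1)$ to force $F$ to be a star or $\binom{A_0}{n-2}$'' is not a proof --- it is a restatement of the goal. Two specific problems: first, the value $2(n-1)$ is the bound only in the two special parameter triples; for every other tied instance (and there are many, since $\binom{n}{r}=\binom{n}{s}$ whenever $r+s=n$) you must show that \emph{no} nontrivial fragment exists at all, and you give no mechanism for this. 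Second, even granting the orbit-block property, passing from ``any two elements of $F$ determine $F$ up to the $S_n$-stabilizer'' to the explicit list of possible $F$'s requires a genuine combinatorial classification of such families in $\binom{[n]}{r}$; this is exactly the technical core of \cite{MR2971702} and cannot be waved through. Without that step the equality characterization is not established.
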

	\begin{proof}[\textnormal{\textbf{Proof of Theorem \ref{1}  $(1)$.}}]
		There is nothing to prove when $r_{k}=t$. Thus we may assume that $r_{k}>t$.

		 The proof of the inequality $(1.1)$ shows that $G_{1}(X,  Y)$ satisfies the conditions of Theorem \ref{import} by $n>2r_{k}$.  
   	Then we have $\mathcal{F}(X,Y)\supseteq\binom{X_{k}}{1}  \cup\binom{Y_{\ell}}{1} \cup \phi \left(\binom{X_{k}}{1}\cup \binom{Y_{\ell}}{1}\right)$ since $|X|-d(Y_{\ell})=|Y|-d(X_{k})$. 
	Moreover, according to Proposition \ref{7}\textnormal{(ii)} and symmetry,  we get $d(X_{k-1})-1>d(X_{k})$ and $d(Y_{\ell-1})-1>d(Y_{\ell})$. By Proposition \ref{5},  Theorem \ref{import} \textnormal{(iv)} does not hold. 
		
		Now,  we  determine all the nontrivial fragments. Suppose that there is a nontrivial fragment in $\mathcal{F}(X,  Y)$.  By Lemma \ref{8},  there exists  non-singleton $F\in \mathcal{F}(X,  Y)$ such that $|F\cap X_{i}|\leq 1$ for any $i\in [k-1]$ and $|F\cap Y_{j}|\leq 1$ for any $j\in [\ell-1]$.  Moreover, we know that $F$ is a minimum-sized nontrivial fragment. By symmetry,   we may assume that $F\in \mathcal{F}(X)$. 	
		Since $|F|\geq 2$ and Theorem \ref{import} \textnormal{(iv)} does not hold, we have $|F\cap X_{k}|\geq 2$.  
		
		The definition of $\phi(F)$ implies $$\bigcup_{C\in \phi(F)}\left\{ B\in \binom{[n]}{r_{k}}:  C\subseteq B\right\}\subseteq \phi(F).$$ Thus
		\begin{equation}\tag{3.4}
			\phi(F)\cap Y_{k}\neq \emptyset.
		\end{equation}
		
		Note that both $G_{1}(X,  Y)$ and $G_{1}(X_{k},   Y_{k})$ satisfy all conditions in Theorem \ref{import}. By Theorem \ref{import} \textnormal{(iii)},  the elements of $\binom{X_{k}}{1}$  are fragments of $G_{1}(X,Y)$ and $G_{1}(X_{k},Y_{k})$. Consequently, for any $z\in X_{k}$, 
		\begin{equation}\tag{3.5}
			\alpha(X,  Y)=\alpha(X_{k},  Y_{k})+|Y_{k-1}\backslash N(z)|+\cdots+|Y_{1}\backslash N(z)|.
		\end{equation}
		
		Let $x_{k},   x_{k}^{\prime}\in F \cap X_{k}$. By $(3.4)$ and $(3.5)$, we have	
		\begin{equation}
			\tag{$3.6$}
			\begin{aligned}
				&\ |F\cap X_{k}|+|\phi(F)\cap Y_{k}|+|Y_{k-1}\backslash N(x_{k})|+\cdots+|Y_{1}\backslash N(x_{k})|\\
				\leq &\ \alpha(X_{k},  Y_{k})+|Y_{k-1}\backslash N(x_{k})|+\cdots+|Y_{1}\backslash N(x_{k})|
				=\alpha(X,   Y)=|F|+|\phi(F)|\\
				=&\ |F\cap X_{k}|+|\phi(F)\cap Y_{k}|+|F
				\backslash X_{k}| +|Y_{k-1}\backslash N(F)|+\cdots+|Y_{1}\backslash N(F)|.
			\end{aligned}
		\end{equation}
		In order to determine the structure of $F$, we consider inequality (3.6) in two cases.
		
		\noindent\textbf{Case 1.} $F\backslash X_{k}\neq \emptyset$. 
		
		In this case, we have $k\geq2$. 
		Let $i_{1}=\min\lbrace i\in [k-1]:F\cap X_{i}\neq \emptyset\rbrace$  and $x_{i_{1}}\in F\cap X_{i_{1}}$. For any $m\in[k-i_{1}]$, choose some $z_{i_{1}+m}\in X_{i_{1}+m}$. Then for any $j\in[\ell]$, by Proposition \ref{6} \textnormal{(ii)},  we have 
		\begin{equation*}
			\begin{aligned}
				|Y_{j}\backslash N(x_{i_{1}} )\leq
				|Y_{j}\backslash N(z_{i_{1}+1} )|-1.
			\end{aligned}
		\end{equation*}
		Repeating this process successively for $(i_{1}+1,i_{1}+2), \ldots,(k-1, k)$, we get
		\begin{equation*}\tag{$3.7$}
			\begin{aligned}
				|Y_{j}\backslash N(x_{i_{1}} )\leq
				|Y_{j}\backslash N(z_{k} )|-(k-i_{1}).
			\end{aligned}
		\end{equation*}
		
		Since $\lbrace i\in [k-1]:F\cap X_{i}\neq \emptyset\rbrace\subseteq \left\{  k-1,\ldots,i_{1}+1, i_{1}\right\}$ and $\left| \lbrace i\in [k-1]:F\cap X_{i}\neq \emptyset\rbrace\right|=|F\backslash X_{k}|$, we have $|F\backslash X_{k}|\leq k-i_{1}$.  This together with $(3.7)$ yields  $$|Y_{j}\backslash N(F)|\leq|Y_{j}\backslash N(x_{i_{1}} )|\leq
		|Y_{j}\backslash N(z_{k} )|-|F\backslash X_{k}|=|Y_{j}\backslash N(x_{k} )|-|F\backslash X_{k}|,$$ 
		which implies that inequality (3.6) transforms into 
		\begin{equation*}
			\begin{aligned}
				&\ |F\cap X_{k}|+|\phi(F)\cap Y_{k}|+|Y_{k-1}\backslash N(x_{k})|+\cdots+|Y_{1}\backslash N(x_{k})|\\
				\leq &\ \alpha(X_{k},  Y_{k})+|Y_{k-1}\backslash N(x_{k})|+\cdots+|Y_{1}\backslash N(x_{k})|
				\\		
				=&\ |F\cap X_{k}|+|\phi(F)\cap Y_{k}|+|Y_{k-1}\backslash N(x_{k})|+\cdots+|Y_{1}\backslash N(x_{k})|-(k-2)|F\backslash X_{k}|.
			\end{aligned}
		\end{equation*}
		Thus  $k=2$ and  above 
		 equality holds. Therefore, we have $|F\cap X_{2}|+|\phi(F)\cap Y_{2}|=\alpha(X_{2},  Y_{2})$. 
		
		Next, we  prove that $|\phi(F)\cap Y_{2}|\geq 2$. If $\phi(F)\cap Y_{2}=\lbrace y_{2}\rbrace$ for some  $y_{2}\in Y_{2}$,  then $\phi(F)\cap Y_{1}=\emptyset$. Otherwise  
		$$|F\cap X_{2}| \leq|X_{2}\backslash N(\phi(F)\cap Y_{1})|<|X_{2}\backslash N(y_{2})|=|F\cap X_{2}|.$$   Hence $\phi(F)=\left\{ y_{2}\right\}.$ This is a contradiction to the fact that  $F$  is a nontrivial fragment in $\mathcal{F}(X,  Y)$. 
		
		Note that $n> 2r_{2}$ and $r_{2}>t$. Then $n$, $r_{2}$ and $t$ satisfy conditions in Theorem \ref{ref1}. We have known that $F\cap X_{2}$ and $\phi(F)\cap Y_{2}$ are non-empty cross $t$-intersecting families with $|F\cap X_{2}|\geq 2$ and $|\phi(F)\cap Y_{2}|\geq 2$, and $|F\cap X_{2}|+|\phi(F)\cap Y_{2}|$ is maximum. Therefore, $F\cap X_{2}$ and $\phi(F)\cap Y_{2}$ are one of families stated in Theorem \ref{ref1}, but \textnormal{(i)} and \textnormal{(ii)}.  It is easy to check that Theorem \ref{ref1} \textnormal{(iv)} does not hold by $n>2r_{2}$.   Consequently, we have 
		$$F\cap X_{2}=\phi(F)\cap Y_{2}=\left\{  
		C\in \binom{[n]}{ 2}: i\in C\right\},$$for some $i\in [n]$.    Moreover, it is easy to check that  $F\cap X_{1}=\phi(F)\cap Y_{1}=\left\{ i \right\}$.

		\noindent	\textbf{Case 2.} $F\backslash X_{k}=\emptyset$.   
		
		By Proposition \ref{6} \textnormal{(iii)} and $|Y_{j}\backslash N(F)|\leq |Y_{j}\backslash N(x_{k},x_{k}^{\prime})|$,  inequality (3.6) transforms into 
		\begin{equation*}
			\begin{aligned}
				&\ |F\cap X_{k}|+|\phi(F)\cap Y_{k}|+|Y_{k-1}\backslash N(x_{k})|+\cdots+|Y_{1}\backslash N(x_{k})|\\
				\leq &\ \alpha(X_{k},  Y_{k})+|Y_{k-1}\backslash N(x_{k})|+\cdots+|Y_{1}\backslash N(x_{k})|\\
				=&\ |F\cap X_{k}|+|\phi(F)\cap Y_{k}|+|Y_{k-1}\backslash N(x_{k})|+\cdots+|Y_{1}\backslash N(x_{k})|-(k-1).
			\end{aligned}
		\end{equation*}
		Hence $k=1$ and $F$ is a nontrivial fragment in $G_{1}(X_{1}, Y_{1})$. 
		
		Note that $n> 2r_{1}$ and $r_{1}>t$. Then $n$, $r_{1}$ and $t$ satisfy conditions in Theorem \ref{ref1} and Theorem \ref{ref1} \textnormal{(iv)} does not hold. Similarly, we know that $F$ and $\phi(F)$ are non-empty cross $t$-intersecting families with $|F|\geq 2$ and $|\phi(F)|\geq 2$, and $|F|+|\phi(F)|$ is maximum.  Consequently, we have,  for some $i\in [n]$, 
		$$F=\phi(F)=\left\{	C\in \binom{[n]}{ 2}: i\in C\right\}.$$ 
		
		In above two cases, we have $|F|=|\phi(F)|$.  This together with the minimality of $F$ yields $|D|=|\phi(D)|$ for any nontrivial fragment $D$. That is to say,  the size of any nontrivial fragment is the minimum.   Applying above process for every nontrivial fragment, we get  the desired result.
	\end{proof}
	\begin{proof}[\textnormal{\textbf{Proof of Theorem \ref{1} $(2)$.}}]
		The Proof of inequality $(1.1)$ shows that $G_{1}(X,  Y)$ satisfies the conditions of Theorem \ref{import} by $n>2\max \lbrace r_{k}, s_{\ell}\rbrace $. 
		
		Next, we  check that  Theorem \ref{import} \textnormal{(iv)} does not hold. 	 If $(k, \ell)=(1,1)$, then there is nothing to prove.   
		On the other hand, we may assume $(k, \ell)\neq(1,1)$. If $r_{k}$,   $s_{\ell}\geq 2$,  by Proposition \ref{6} \textnormal{(ii)} and symmetry,  we have $d(X_{k-1})-1>d(X_{k})$ and  $d(Y_{\ell-1})-1>d(Y_{\ell})$. Hence   Theorem \ref{import} \textnormal{(iv)} does not hold by Proposition \ref{5}. If $r_{k}=1$ and $s_{\ell}\geq 2$,   suppose that $F$ is the fragment described in Theorem \ref{import} \textnormal{(iv)}. Then $F\in \mathcal{F}(Y)$ and $\ell\geq 2$. Since $2s_{\ell}+1\leq n$ and $s_{\ell}>1$, we have
		$$|F|+|\phi(F)|\leq \ell + s_{\ell}<n\leq 1+ \binom{n-1}{s_{\ell}-1}\leq |\lbrace x \rbrace|+|Y_{\ell}\backslash N(x)|\leq \alpha(X,   Y),$$ a contradiction to the fact $F\in \mathcal{F}(Y)$.  Therefore,  Theorem \ref{import} \textnormal{(iv)} does not hold. Similarly, we can prove that Theorem \ref{import} \textnormal{(iv)} does not hold if $r_{k}\geq2$ and $s_{\ell}=1$.
		
		By Theorem \ref{import}, we just need to prove that $|Y|-d(X_{k})> |X|-d(Y_{\ell})$ when $\max R\bigtriangleup S \in S$ and $|Y|-d(X_{k})< |X|-d(Y_{\ell})$ when $\max R\bigtriangleup S \in R$. We only prove the former,  the proof of the latter is similar. Let $s_{\ell-p}=\max R\bigtriangleup S$,  then $r_{k-p+1}=s_{\ell-p+1}$, $r_{k-p+2}=s_{\ell-p+2}$,     $\ldots$, $r_{k}=s_{\ell}$.  If $k=p$, then it is easy to see $|Y|-d(X_{k})> |X|-d(Y_{\ell})$. Next, we may assume $k>p$.

		 It is clear that $s_{\ell -p}> r_{k-p}\geq t$. This together with $n\geq  2^{s_{\ell}+1}k(s_{\ell}-t)+2r_{k}+1$ yields 
		\begin{equation}\tag{3.8}
			\begin{aligned}
				&\binom{r_{k}}{t}\binom{n-r_{k}}{s_{\ell-p}-t}=\binom{r_{k}}{t}\binom{n-r_{k}-1}{s_{\ell-p}-t-1}\dfrac{n-r_{k}}{s_{\ell-p}-t}\geq \binom{n-r_{k}-1}{s_{\ell-p}-t-1}\dfrac{n-r_{k}}{s_{\ell-p}-t}\\
				\geq&\ 2k\binom{n-r_{k}-1}{s_{\ell-p}-t-1}\left( \binom{s_{\ell}}{0}+\binom{s_{\ell}}{1}+\cdots + \binom{s_{\ell}}{s_{\ell}}  \right)\\
				\geq&\ 2k\binom{n-r_{k}-1}{s_{\ell-p}-t-1}\left( \binom{s_{\ell}}{t}+\binom{s_{\ell}}{t+1}+\cdots + \binom{s_{\ell}}{r_{k}}  \right)+1.
			\end{aligned}
		\end{equation}
		
		Note that $n-r_{k}-1\geq 2(s_{\ell-p}-t-1)$ and $s_{\ell-p}\geq r_{k-p}+1$. Then
		\begin{equation}\tag{3.9}
			\binom{n-r_{k}-1}{s_{\ell-p}-t-1}\geq \binom{n-r_{k}-1}{r_{k-p}-t}
		\end{equation}
		
		It is easy to check $s_{\ell}\geq r_{k}$ and $n-r_{k}-r_{k-p}+t\geq r_{k-p}-t$, implying that 
		\begin{equation}\tag{3.10}
			\begin{aligned}
				&\binom{n-s_{\ell}}{r_{k-p}-t}\leq \binom{n-r_{k}}{r_{k-p}-t}=\dfrac{n-r_{k}}{n-r_{k}-r_{k-p}+t}\binom{n-r_{k}-1}{r_{k-p}-t}\\=& \left(1+\dfrac{r_{k-p}-t}{n-r_{k}-r_{k-p}+t}\right)\binom{n-r_{k}-1}{r_{k-p}-t} \leq 2\binom{n-r_{k}-1}{r_{k-p}-t}.
			\end{aligned}
		\end{equation}
		
		Note that $r_{k}\geq r_{i}$, $n-s_{\ell}\geq 2(r_{i}-t)$ and $r_{i}-t\geq r_{i}-j$ for any $i\in [k-p]$ and $t\leq j\leq r_{i}$. Then we have
		\begin{equation*}
			\begin{aligned}
				&\binom{n-s_{\ell}}{r_{k-p}-t}\left( \binom{s_{\ell}}{t}+\binom{s_{\ell}}{t+1}+\cdots + \binom{s_{\ell}}{r_{k}}  \right)\\
				\geq &\sum_{j=t}^{r_{i}}\binom{n-s_{\ell}}{r_{k-p}-t} \binom{s_{\ell}}{j} \geq \sum_{j=t}^{r_{i}}\binom{n-s_{\ell}}{r_{i}-t} \binom{s_{\ell}}{j}
				\geq \sum_{j=t}^{r_{i}}\binom{n-s_{\ell}}{r_{i}-j} \binom{s_{\ell}}{j}.
			\end{aligned}
		\end{equation*}  
		This together with (3.8), (3.9) and (3.10) yields
		\begin{equation*}
			\binom{r_{k}}{t}\binom{n-r_{k}}{s_{\ell-p}-t}\geq\sum_{j=t}^{r_{1}}\binom{s_{\ell}}{j}\binom{n-s_{\ell}}{r_{1}-j}+\cdots+\sum_{j=t}^{r_{k-p}}\binom{s_{\ell}}{j}\binom{n-s_{\ell}}{r_{k-p}-j}+1,
		\end{equation*}
		implying that 
		\begin{equation*}
			\begin{aligned} 
				 &\ |Y|-d(X_{k})-\left( |X|-d(Y_{\ell})\right)\\
				=&\ \sum_{j=t}^{s_{1}}\binom{r_{k}}{j}\binom{n-r_{k}}{s_{1}-j}+\cdots+\sum_{j=t}^{s_{\ell-p}}\binom{r_{k}}{j}\binom{n-r_{k}}{s_{\ell-p}-j}\\
				&\ -\left(\sum_{j=t}^{r_{1}}\binom{s_{\ell}}{j}\binom{n-s_{\ell}}{r_{1}-j}+\cdots+\sum_{j=t}^{r_{k-p}}\binom{s_{\ell}}{j}\binom{n-s_{\ell}}{r_{k-p}-j} \right)\\
				\geq &\ \binom{r_{k}}{t}\binom{n-r_{k}}{s_{\ell-p}-t} -\left(\sum_{j=t}^{r_{1}}\binom{s_{\ell}}{j}\binom{n-s_{\ell}}{r_{1}-j}+\cdots+\sum_{j=t}^{r_{k-p}}\binom{s_{\ell}}{j}\binom{n-s_{\ell}}{r_{k-p}-j} \right)>0.     
			\end{aligned}
		\end{equation*}
	The desired result follows.	
	\end{proof}
	\section{Proof of Theorem \ref{2}}	
	Let $n$ and $t$ be positive integers.  Suppose that $R=\left\{ r_{1},   r_{2},   \ldots,   r_{k} \right\}$ and   $S=\left\{  s_{1},  s_{2},  \ldots,  s_{\ell}\right\}$ are subsets of $[n]$ with $r_{1}< r_{2}< \cdots< r_{k}$,   $s_{1}< s_{2}< \cdots< s_{\ell}$, $t\leq \min R\cup S$ and $r_{k}+s_{\ell}-t<n$. 
	Set $X_{i} ={V\brack r_{i}}$, $Y_{j}={V\brack s_{j}}$, $X=X_{1}\cup\cdots \cup X_{k}$ and $Y=Y_{1}\cup\cdots \cup Y_{\ell}.$  
	
	Let $G_{2}(X,Y)$ be a bipartite graph, whose vertex set is $X\biguplus Y$, and two vertices $x\in X$ and $y\in Y$ are adjacent if and only if $\dim(x\cap y)<t$.

	It is easy to verify that $GL(V)$ is an automorphism group of $G_{2}(X, Y)$ stabilizing $X$. Moreover, $X_{1},\ldots, X_{k}$ and   $Y_{1},\ldots ,Y_{\ell}$ are exactly all orbits under the action of $GL(V)$ on $X\cup Y$. 
	
	\begin{prop}\label{7}
		Let $G_{2}(X,  Y)$ be as above. Then the following hold. 
		\begin{enumerate}[\normalfont (i)]
			\vspace{-0.3cm}	\item $G_{2}(X,  Y)$ is a non-complete bipartite graph.
			\vspace{-0.3cm}	\item  $|Y_{j}\backslash N(x_{i})|\leq |Y_{j}\backslash N(x_{i+1})|- 2$ for any $i\in [k-1]$,  $ j\in [\ell]$, $x_{i}\in X_{i}$ and  $x_{i+1}\in X_{i+1}$. Moreover,  $|Y\backslash N(x_{i})|\leq |Y\backslash N(x_{i+1})|-2$.
			\vspace{-0.3cm}	\item  If $r_{i}\geq s_{j}$,   then $|Y_{j}\backslash N(x_{i},  x_{i}^{\prime})|\leq |Y_{j}\backslash N(x_{i})|-1$ for any $x_{i},  
			x_{i}^{\prime}\in X_{i}$.
		\end{enumerate}
	\end{prop}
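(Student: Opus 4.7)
The three claims are the $q$-analogues of parts of Proposition \ref{6}, and the plan is to follow the same template, using $GL(V)$-transitivity in place of $S_{n}$ and explicit direct-sum decompositions in place of the set-theoretic constructions $\mathcal{D}$, $\mathcal{G}$, $\mathcal{H}$ of the earlier proof.

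For (i), from $r_{1}+s_{1}-t\leq r_{k}+s_{\ell}-t<n$ I would pick a decomposition $V=W\oplus V_{1}\oplus V_{2}\oplus V_{3}$ with $\dim W=t$, $\dim V_{1}=r_{1}-t$, $\dim V_{2}=s_{1}-t$, and $\dim V_{3}\geq 1$; then $x=W\oplus V_{1}\in X_{1}$ and $y=W\oplus V_{2}\in Y_{1}$ satisfy $\dim(x\cap y)=\dim W=t$, so they are non-adjacent in $G_{2}(X,Y)$.

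For (ii), I would first use $GL(V)$-transitivity on incident pairs to reduce to $x_{i}\subseteq x_{i+1}$, which gives $Y_{j}\setminus N(x_{i})\subseteq Y_{j}\setminus N(x_{i+1})$. To exhibit two elements of the difference I would fix $W_{1}\in{x_{i}\brack t-1}$, a line $W_{2}\subseteq x_{i+1}$ with $W_{2}\cap x_{i}=0$, a complement $U$ of $x_{i+1}$ in $V$, and $U'\in{U\brack s_{j}-t}$; the last choice is available because $n-r_{i+1}\geq s_{j}-t+1$ follows from $r_{k}+s_{\ell}-t<n$. Then $y=W_{1}+W_{2}+U'\in Y_{j}$ satisfies $y\cap x_{i+1}=W_{1}+W_{2}$ of dimension $t$ and $y\cap x_{i}=W_{1}$ of dimension $t-1$, placing $y$ in the required set difference. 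Two distinct such $y$ then arise by varying $U'$ when $s_{j}>t$, or by varying the $t$-space $W_{1}+W_{2}\subseteq x_{i+1}$ (not contained in $x_{i}$) when $s_{j}=t$, for which the count of such $t$-spaces through $W_{1}$ is at least $q\geq 2$. The second assertion follows by summing the inequality over $j\in[\ell]$.

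For (iii), assuming $x_{i}\neq x_{i}'$ and setting $d=\dim(x_{i}\cap x_{i}')$, the plan is to produce $y\in Y_{j}$ with $\dim(y\cap x_{i})\geq t$ and $\dim(y\cap x_{i}')<t$, splitting into three cases as in the set proof. If $d<t$, any $y\in{x_{i}\brack s_{j}}$ works, using $r_{i}\geq s_{j}$ and $y\cap x_{i}'\subseteq x_{i}\cap x_{i}'$. If $d\geq t$ and $r_{i}-d\geq s_{j}-t+1$, I would pick $W_{1}\in{x_{i}\cap x_{i}'\brack t-1}$ and a subspace $W_{2}$ of dimension $s_{j}-t+1$ inside a chosen complement $C$ of $x_{i}\cap x_{i}'$ in $x_{i}$; the transversality $C\cap x_{i}'=0$ forces $y=W_{1}+W_{2}$ to satisfy $y\cap x_{i}'=W_{1}$. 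If $d\geq t$ and $r_{i}-d<s_{j}-t+1$, I would take all of $C$ and fill out to dimension $s_{j}$ using a subspace drawn from a complement of $x_{i}+x_{i}'$ in $V$, whose dimension $n-2r_{i}+d$ is large enough because $n>r_{i}+s_{j}-t$. The main technical care is in part (ii), where the two-element count must survive the smallest parameters (for instance $q=2$, $r_{i+1}=r_{i}+1$, $s_{j}=t=1$), relying on $r_{i}\geq t\geq 1$; the rest is routine bookkeeping with the dimension formula $\dim(A+B)+\dim(A\cap B)=\dim A+\dim B$.
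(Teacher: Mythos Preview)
Your proposal is correct and follows essentially the same route as the paper's proof: the paper fixes a basis $\{\varepsilon_1,\ldots,\varepsilon_n\}$ and builds the witnesses as coordinate subspaces (the sets $\mathcal{D}$, $\mathcal{G}$, $\mathcal{H}$ are spans of selected basis vectors), which is exactly your abstract direct-sum construction specialised to a chosen basis. The only cosmetic difference is that for part~(ii) the paper handles the extreme case $r_{i+1}=r_i+1$, $s_j=1$ by counting the $q^{r_i}$ complements of $x_i$ in $x_{i+1}$, whereas you phrase it as counting $t$-spaces through $W_1$ in $x_{i+1}$ not contained in $x_i$; both give at least $q\geq 2$ elements, and for the reduction to $x_i\subseteq x_{i+1}$ note that transitivity on each orbit separately already suffices (no pair transitivity is needed), since $|Y_j\setminus N(x_i)|$ depends only on the orbit of $x_i$.
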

	
	\begin{proof}[\textnormal{\textbf{Proof.}}] Let $\left\{ \varepsilon_{1},  \varepsilon_{2},  \ldots, \varepsilon_{n} \right\}$ be a basis of $V$.
		
		\textnormal{(i)} Note that $\langle \varepsilon_{1},  \varepsilon_{2},  \ldots, \varepsilon_{r_{1}}\rangle \in X_{1}$ and   $ \langle\varepsilon_{1},  \varepsilon_{2},  \ldots, \varepsilon_{s_{1}}\rangle\in Y_{1}$ are not adjacent.  Hence $G_{2}(X,  Y)$ is not a complete bipartite graph.
		
		\textnormal{(ii)} Since $X_{i}$ and $X_{i+1}$ are two orbits under the action of $GL(V)$, we may assume  $x_{i}=\langle \varepsilon_{1},  \varepsilon_{2},  \ldots, \varepsilon_{r_{i}}\rangle$ and   $x_{i+1}=\langle\varepsilon_{1},  \varepsilon_{2},  \ldots, \varepsilon_{r_{i+1}}\rangle$.   Then $Y_{j}\backslash N(x_{i})\subseteq Y_{j}\backslash N(x_{i+1})$. So, it is sufficient to prove $|(Y_{j}\backslash N(x_{i+1}))\backslash (Y_{j}\backslash N(x_{i}))|\geq 2$.
		Consider the following set
		$$\mathcal{D}\!=\!\left\{ \langle z_{1}\cup z_{2}\cup z_{3}\rangle:\!z_{1}\!\in \!\binom{\left\{\varepsilon_{1},  \varepsilon_{2},  \ldots, \varepsilon_{r_{i}} \right\}}{t-1},   z_{2}\!\in\! {\left\{ \varepsilon_{r_{i}+1},   \ldots, \varepsilon_{r_{i+1}}  \right\}},   z_{3}\!\in\! \binom{\left\{  \varepsilon_{r_{i+1}+1},   \ldots, \varepsilon_{n} \right\}}{s_{j}-t}\right\}\!. $$
		It is easy to check that $\mathcal{D}\subseteq (Y_{j}\backslash N(x_{i+1}))\backslash (Y_{j}\backslash N(x_{i}))$.  Moreover,  if $s_{j}\geq 2$ or $r_{i+1}\geq r_{i}+2$,  then $|\mathcal{D}|\geq 2$.   If $r_{i+1}=r_{i}+1$ and $s_{j}=1$,  then $t=1$. Observe that there are $q^{r_{i}}$ complements of $x_{i}$ in $x_{i+1}$ and   these complements are all in $ (Y_{j}\backslash N(x_{i+1}))\backslash (Y_{j}\backslash N(x_{i}))$. The desired result follows.

		\textnormal{(iii)} Suppose that $\dim(x_{i}\cap x_{i}^{\prime})=m$ and  $\left\{  \alpha_{1},  \ldots , \alpha_{m}   \right\}$ is a basis of $x_{i}\cap x_{i}^{\prime}$. Let $\lbrace   \alpha_{1},  \ldots , \alpha_{m}, $  $ \beta_{1},  \ldots, \beta_{r_{i}-m}\rbrace$ and $\left\{   \alpha_{1},  \ldots , \alpha_{m},  \beta_{1}^{\prime},  \ldots, \beta_{r_{i}-m}^{\prime} \right\}$ be a basis of $x_{i}$ and $x_{i}^{\prime}$,  respectively. Further,  they can be expanded to a basis of $V$ denoted by 
		$\lbrace\alpha_{1},  \ldots , \alpha_{m},  \beta_{1},  \ldots, \beta_{r_{i}-m},   \beta_{1}^{\prime}, $
		$ \ldots, \beta_{r_{i}-m}^{\prime}, $   
		$\eta_{1},  \ldots, \eta_{n-2r_{i}+m}  \rbrace$.
		
		Note that $Y_{j}\backslash N(x_{i},  x_{i}^{\prime}) \subseteq Y_{j}\backslash N(x_{i})$. So, it is sufficient to prove $(Y_{j}\backslash N(x_{i}))  \backslash (Y_{j}\backslash N(x_{i}, x_{i}^{\prime}))\neq \emptyset$. If $m<t$,  then ${x_{i} \brack s_{j}} \subseteq ( Y_{j}\backslash N(x_{i}))\backslash (Y_{j}\backslash N(x_{i},  x_{i}^{\prime}))$. If $m\geq t$ and $r_{i}-m\geq s_{j}-t+1$,  consider the following non-empty set
		$$\mathcal{G}=\left\{ \langle z_{1}\rangle\oplus \langle z_{2}\rangle :z_{1}\in \binom{\left\{ \alpha_{1},  \ldots , \alpha_{m} \right\}}{t-1},    z_{2}\in \binom{\left\{ \beta_{1},  \ldots, \beta_{r_{i}-m}  \right\}}{s_{j}-t+1}\right\}. $$
		It is easy to verify that $\mathcal{G}\subseteq(Y_{j}\backslash N(x_{i}))  \backslash (Y_{j}\backslash N(x_{i}, x_{i}^{\prime}))$. 
		If $m\geq t$ and $r_{i}-m<s_{j}-t+1$,  consider the following non-empty set
		$$\mathcal{H}=\left\{ \langle z_{1} \cup z_{2} \cup \lbrace\beta_{1},  \ldots, \beta_{r_{i}-m}\rbrace\rangle :z_{1}\in \binom{\left\{ \alpha_{1},  \ldots , \alpha_{m} \right\}}{t-1},    z_{2}\in \binom{\left\{ \eta_{1},  \ldots, \eta_{n-2r_{i}+m}  \right\}}{s_{j}+m+1-r_{i}-t}\right\}. $$
		We  easily check that $\mathcal{H}\subseteq(Y_{j}\backslash N(x_{i}))  \backslash (Y_{j}\backslash N(x_{i}, x_{i}^{\prime}))$. 
	\end{proof}
	In order to compute $|Y\backslash N(x)|$ and $|X\backslash N(y)|$,  we need the following well-known result from \cite[Lemma 2.4]{MR2304003} and \cite[Lemma 4]{MR1699558} .
	\begin{lemma}\label{10} \textnormal{(\cite{MR2304003, MR1699558})} 
		Let $n$, $a$, $b$ and $j$ be positive integers with $j\leq \min\left\{ a, b \right\}$ and $a+b-j\leq n$. If $A\in {V \brack a}$,  then $$\left| \left\{ B\in {V \brack b}: \dim(B\cap A)=j \right\} \right|=q^{(a-j)(b-j)}{a \brack j}{n-a \brack b-j}.$$ 
	\end{lemma}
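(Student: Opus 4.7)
The plan is to count the $b$-dimensional subspaces $B$ of $V$ with $\dim(B\cap A)=j$ by first fixing the intersection. For each such $B$, the intersection $B\cap A$ is itself a $j$-dimensional subspace of $A$, and there are ${a\brack j}$ choices for it. So it suffices to show that, for a fixed $j$-dimensional $C\subseteq A$, the number of $B\in{V\brack b}$ with $B\supseteq C$ and $B\cap A=C$ equals $q^{(a-j)(b-j)}{n-a\brack b-j}$; multiplying by ${a\brack j}$ will then give the stated formula.

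Next I would pass to the quotient. By the correspondence theorem, the subspaces $B$ of $V$ containing $C$ correspond bijectively to subspaces $\overline{B}=B/C$ of $V/C$, with the condition $B\cap A=C$ translating to $\overline{B}\cap(A/C)=0$. Here $\dim(V/C)=n-j$, $\dim(A/C)=a-j$, and $\dim\overline{B}=b-j$, so the task reduces to the following auxiliary count: in an $m$-dimensional $\mathbb{F}_q$-space $W$ with a fixed $k$-dimensional subspace $U$, the number of $\ell$-dimensional subspaces $L$ with $L\cap U=0$ equals $q^{k\ell}{m-k\brack \ell}$.

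To prove this auxiliary count, I would enumerate ordered bases $(v_1,\ldots,v_\ell)$ of such $L$. The condition $L\cap U=0$ is equivalent to $v_1,\ldots,v_\ell$ remaining linearly independent modulo $U$, i.e.\ at step $i$ the vector $v_i$ must lie outside $U+\langle v_1,\ldots,v_{i-1}\rangle$, a subspace of dimension $k+i-1$. This gives $\prod_{i=0}^{\ell-1}(q^m-q^{k+i})$ ordered choices; dividing by the number $\prod_{i=0}^{\ell-1}(q^\ell-q^i)$ of ordered bases of any fixed $\ell$-space and pulling out a factor $q^{k\ell}$ from the numerator produces $q^{k\ell}{m-k\brack\ell}$.

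Substituting $m=n-j$, $k=a-j$, $\ell=b-j$ into the auxiliary count yields $q^{(a-j)(b-j)}{n-a\brack b-j}$, and the hypothesis $a+b-j\leq n$ is precisely $\ell\leq m-k$, which guarantees the existence (and positivity of the Gaussian coefficient) of such $L$. Multiplying by the ${a\brack j}$ choices of $C$ completes the proof. There is no real obstacle here, the only point requiring minor care is the verification that the dimension inequalities underlying the $q$-binomial identities are implied by $j\leq\min\{a,b\}$ and $a+b-j\leq n$.
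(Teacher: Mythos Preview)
Your argument is correct: fixing the intersection $C=B\cap A\in{A\brack j}$, passing to the quotient $V/C$, and invoking the standard count $q^{k\ell}{m-k\brack\ell}$ for $\ell$-dimensional subspaces of an $m$-space meeting a fixed $k$-space trivially is exactly the right decomposition, and your verification of the auxiliary count via ordered bases is clean and complete. The only remark is that the paper does not supply its own proof of this lemma at all; it is quoted as a known result from \cite{MR2304003,MR1699558}, so there is no ``paper's approach'' to compare against---your proof simply fills in what the paper takes for granted.
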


		By Lemma \ref{10}, we have
		$$|Y|-d(X_{k})=\sum_{j=t}^{s_{1}}q^{(r_{k}-j)(s_{1}-j)}{r_{k}\brack j}{n-r_{k}\brack s_{1}-j}+\cdots+\sum_{j=t}^{s_{\ell}}q^{(r_{k}-j)(s_{\ell}-j)}{r_{k}\brack j}{n-r_{k}\brack s_{\ell}-j} , $$
		$$|X|-d(Y_{\ell})=\sum_{j=t}^{r_{1}}q^{(s_{\ell}-j)(r_{1}-j)}{s_{\ell}\brack j}{n-s_{\ell}\brack r_{1}-j}+\cdots+\sum_{j=t}^{r_{k}}q^{(s_{\ell}-j)(r_{k}-j)}{s_{\ell}\brack j}{n-s_{\ell}\brack r_{k}-j}.$$
	So, in order to prove inequality $(1.2)$, we just need to prove $\alpha(X,Y)=\max \lbrace 1+|Y|-d(X_{k}), 1+|X|-d(Y_{\ell})
	\rbrace$.
	\begin{proof}[\textnormal{\textbf{Proof of inequality $(1.2)$.}}]
		By Proposition \ref{7} \textnormal{(ii)} and symmetry,  both $d(X_{1})>d(X_{2})>\cdots>d(X_{k})$ and $d(Y_{1})>d(Y_{2})>\cdots>d(Y_{\ell})$ hold. 
		
		Let $\left\{ \varepsilon_{1},  \varepsilon_{2},  \ldots, \varepsilon_{n} \right\}$ be a basis of $V$. For each $i\in [k]$, in order to prove that $X_{i}$ is not contained in any nontrivial independent set, we just need to check that $N(X_{i})=Y$.  For any $z\in[\ell]$, let $y_{z}=\langle\varepsilon_{1}, \ldots, \varepsilon_{s_{z}}\rangle$,   then $y_{z}$ is adjacent to $$x_{i}=\langle \varepsilon_{1}, \varepsilon_{2}, \ldots, \varepsilon_{t-1},  \varepsilon_{s_{z}+1}, \cdots,  \varepsilon_{s_{z}+r_{i}-(t-1)} \rangle\in X_{i}.$$
		Hence $y_{z}\in N(X_{i})$.   
		For any $y\in Y_{z}$, there exists $\gamma \in GL(V)$ such that $\gamma(y_{z})=y$ since $Y_{z}$ is an orbit under the action of $GL(V)$. Then $y$ is adjacent to $\gamma(x_{i})\in X_{i}$. Consequently $Y_{z}\subseteq N(X_{i})$.  Since $z$ is arbitrary, we have $N(X_{i})=Y$. Similarly,  for each $j\in [\ell]$,  $Y_{j}$ is not contained in any nontrivial independent set.  	
		
		For any $A\in X\cup Y$, it is well known that the stabilizer of $A$ is a maximal subgroup of $GL(V)$\cite{MR0746539}.   Therefore, $GL(V)\mid _{X_{i}}$ and $GL(V)\mid _{Y_{j}}$ are primitive for any $i\in[k]$ and $j\in [\ell]$. Thus all conditions in Theorem \ref{import} hold,  this completes the proof.
	\end{proof} 
	To prove Theorem \ref{2}  $(1)$, we need the following theorem.
		\begin{thm}\textnormal{(\cite{MR2971702})}\label{ref2}		
		Let $n$, $r$, $s$ and $t$ be positive integers with $n\geq 4$, $r,s \geq 2$, $t<\min\lbrace r,s \rbrace$, $r+s-t<n$  and ${n \brack r}\leq {n \brack s}$. If $\mathcal{A}\subseteq {V \brack r}$ and $\mathcal{B}\subseteq {V \brack s}$ are non-empty cross $t$-intersecting, then 
		$$\left|\mathcal{A}\right|+\left|\mathcal{B}\right|\le 	1+{n \brack s}-\sum_{i=0}^{t-1}q^{(r-i)(s-i)}{r \brack i}{n-r \brack s-i}.$$ 
		Moreover equality holds if and only if one of the following holds:
		\begin{enumerate}[\normalfont(i)]
			\vspace{-0.2cm}	\item  $\mathcal{A}=\left\{ A \right\}$ and $\mathcal{B}=\left\{ B\in {V \brack s}: \dim(B\cap A)\geq t\right\}$ for some $A\in {V \brack r}$;
			\vspace{-0.2cm}	\item ${n \brack r}={n \brack s}$ and  $\mathcal{B}=\left\{ B \right\}$ and $\mathcal{A}=\left\{ A\in {V \brack r}: \dim(B\cap A)\geq t \right\}$ for some $B\in {V \brack s}$.
		\end{enumerate}
	\end{thm}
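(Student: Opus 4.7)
The plan is to prove Theorem~\ref{ref2} by specializing the bipartite graph framework of Section~2 to the uniform case $k = \ell = 1$. Set $X = {V \brack r}$, $Y = {V \brack s}$, and let $G(X, Y)$ be the bipartite graph in which $x \in X$ and $y \in Y$ are adjacent exactly when $\dim(x \cap y) < t$; then non-empty cross $t$-intersecting pairs $(\mathcal{A}, \mathcal{B})$ correspond bijectively to nontrivial independent sets of $G(X, Y)$, and maximizing $|\mathcal{A}| + |\mathcal{B}|$ amounts to computing $\alpha(X, Y)$. Let $\Gamma = GL(V)$ act on $X \sqcup Y$, stabilizing $X$, with $X$ and $Y$ as the only two orbits.

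I would first verify the hypotheses of Theorem~\ref{import}. Non-completeness of $G(X, Y)$ is Proposition~\ref{7}(i). Primitivity of $GL(V)|_X$ and $GL(V)|_Y$ follows from maximality of subspace stabilizers in $GL(V)$. Neither orbit is contained in a nontrivial independent set: by transitivity of $GL(V)$ on each side and the hypothesis $r + s - t < n$, one exhibits a specific adjacent pair showing $N(X) = Y$ and $N(Y) = X$, exactly as in the proof of inequality~(1.2). Thus Theorem~\ref{import} yields
\[
\alpha(X, Y) = \max\{1 + |Y| - d(X),\ 1 + |X| - d(Y)\}.
\]
By Lemma~\ref{10}, $|Y| - d(X) = \sum_{i=t}^{s} q^{(r-i)(s-i)} {r \brack i}{n-r \brack s-i}$, which equals ${n \brack s} - \sum_{i=0}^{t-1} q^{(r-i)(s-i)} {r \brack i}{n-r \brack s-i}$ after partitioning ${V \brack s}$ by intersection dimension with a fixed $r$-subspace, and symmetrically for $|X| - d(Y)$. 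Double-counting pairs $(A, B) \in X \times Y$ with $\dim(A \cap B) \geq t$ gives $|X|(|Y| - d(X)) = |Y|(|X| - d(Y))$, so the hypothesis ${n \brack r} \leq {n \brack s}$ is equivalent to $|X| - d(Y) \leq |Y| - d(X)$, which delivers the stated upper bound.

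For the equality characterization, observe that Theorem~\ref{import}(iv) cannot occur here: it would require a fragment $F$ with $|F| \geq 2$ and $|F \cap P| \leq 1$ for every orbit $P$, but since $F$ lies entirely in $X$ or entirely in $Y$ and each side is a single orbit, $|F \cap X_1| = |F|$ or $|F \cap Y_1| = |F|$ forces $|F| \leq 1$, a contradiction. If ${n \brack r} < {n \brack s}$, Theorem~\ref{import}(i) gives $\mathcal{F}(X) = \binom{X_1}{1}$, producing extremal structure~(i); if ${n \brack r} = {n \brack s}$, case~(iii) supplies the singletons on both sides, yielding both (i) and (ii).

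The main obstacle will be ruling out additional non-singleton nontrivial fragments in the balanced case ${n \brack r} = {n \brack s}$. I would take a minimum-sized nontrivial fragment $F$, say $F \in \mathcal{F}(X)$, and apply Lemma~\ref{4}: by primitivity of $GL(V)|_X$ and $|F| < |X_1|$, some $\gamma \in GL(V)$ yields $\emptyset \neq F \cap \gamma(F) \neq F$, hence $F \cap \gamma(F) \in \mathcal{F}(X,Y)$ with smaller size, and minimality forces this intersection to be a singleton; combining this with the strict degree estimate from Proposition~\ref{7}(iii) contradicts $|F| \geq 2$. The key technical point is that the factor $q^{(r-i)(s-i)}$ from Lemma~\ref{10} strengthens the Bollobás-type inequalities precisely at the parameter values $(r,s,t)=(2,2,1)$ and $(n-2,n-2,n-3)$ where Theorem~\ref{ref1} admits sporadic set extremals, explaining why those anomalies disappear from the vector space statement.
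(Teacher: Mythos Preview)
This theorem is quoted from Wang--Zhang \cite{MR2971702} and is not proved in the present paper; it is invoked as a black box in the proof of Theorem~\ref{2}(1). So there is no proof here to compare against. That said, your plan is exactly the Wang--Zhang strategy specialized to the single-orbit case $k=\ell=1$, and the framework of Section~2 is precisely the generalization of their method. Your verification of the hypotheses of Theorem~\ref{import}, the exclusion of case~(iv) via the single-orbit observation, and the treatment of the strict case ${n\brack r}<{n\brack s}$ are all correct.

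The genuine gap is in the balanced case ${n\brack r}={n\brack s}$. Your argument there does not close. Knowing that $F\cap\gamma(F)$ is a singleton fragment is \emph{not} a contradiction: singletons of $X_1$ are legitimate fragments, and nothing prevents $F$ from being a fragment as well. Moreover, Proposition~\ref{7}(iii) only yields $|Y\setminus N(x,x')|\le |Y\setminus N(x)|-1$, which gives $|\{x,x'\}|+|Y\setminus N(x,x')|\le 1+|Y\setminus N(x)|=\alpha(X,Y)$ with a non-strict inequality; this is precisely compatible with a two-element fragment, so it cannot by itself exclude $|F|\ge 2$. What is actually needed (and what Wang--Zhang carry out) is a sharper count showing that for distinct $x,x'\in{V\brack r}$ one has $|Y\setminus N(x,x')|\le |Y\setminus N(x)|-2$, and more generally that any non-singleton $F\subseteq X$ satisfies $|F|+|Y\setminus N(F)|<\alpha(X,Y)$. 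This requires a $q$-analogue computation exploiting that the relevant Gaussian coefficients grow by factors of at least $q\ge 2$ rather than by $1$; your closing sentence gestures at this, but the inequality must be written down and checked. Without it, the equality characterization in the balanced case is unproved.
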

	\begin{proof}[\textnormal{\textbf{Proof of Theorem \ref{2}  $(1)$.}}]
		If $r_{k}=t$,  then Theorem \ref{2} is trivial. 
		Next,  we may assume that $r_{k}>t$. 
		
	    The proof of inequality $(1.2)$   shows that $G_{2}(X, Y)$ satisfies all conditions in Theorem \ref{import}.  Then we have $\mathcal{F}(X,Y)\supseteq\binom{X_{k}}{1}  \cup\binom{Y_{\ell}}{1} \cup \phi \left(\binom{X_{k}}{1}\cup \binom{Y_{\ell}}{1}\right)$ since $|X|-d(Y_{\ell})=|Y|-d(X_{k})$. 
	   Moreover, according to Proposition \ref{7}\textnormal{(ii)} and symmetry,  we get $d(X_{k-1})-1>d(X_{k})$ and $d(Y_{\ell-1})-1>d(Y_{\ell})$. By Proposition \ref{5},  Theorem \ref{import} \textnormal{(iv)} does not hold.   
		
	Suppose for contradiction that  there exists a nontrivial fragment in $\mathcal{F}(X,Y)$.  By Lemma \ref{8}, there exists  non-singleton $F\in \mathcal{F}(X,  Y)$ such that $|F\cap X_{i}|\leq 1$ for any $i\in [k-1]$ and $|F\cap Y_{j}|\leq 1$ for any $j\in [\ell-1]$.  Moreover, we know that $F$ is a minimum-sized nontrivial fragment. By symmetry,   we may assume that $F\in \mathcal{F}(X)$. 	Since $|F|\geq 2$ and Theorem \ref{import} \textnormal{(iv)} does not hold, we have $|F\cap X_{k}|\geq 2$. 
		
		 The definition of $\phi(F)$ implies   $$\bigcup_{C\in \phi(F)}\left\{ B\in {V \brack r_{k}}:  C\subseteq B\right\}\subseteq \phi(F).$$  Thus 
		\begin{equation}\tag{4.1}
			\phi(F)\cap Y_{k}\neq \emptyset. 
		\end{equation}
		
		Note that both $G_{2}(X,  Y)$ and $G_{2}(X_{k},   Y_{k})$ satisfy all conditions  in Theorem \ref{import}.  By Theorem \ref{import} \textnormal{(iii)},   we have, 	for any $z\in X_{k}$,  
		\begin{equation}\tag{$4.2$}
			\alpha(X,  Y)=\alpha(X_{k},  Y_{k})+|Y_{k-1}\backslash N(z)|+\cdots+|Y_{1}\backslash N(z)|.
		\end{equation}
		
		Let $x_{k},   x_{k}^{\prime}\in F \cap X_{k}$. By (4.1) and (4.2), we have
		\begin{equation*}
			\begin{aligned}
				&\ |F\cap X_{k}|+|\phi(F)\cap Y_{k}|+|Y_{k-1}\backslash N(x_{k})|+\cdots+|Y_{1}\backslash N(x_{k})|\\
				\leq &\ \alpha(X_{k},  Y_{k})+|Y_{k-1}\backslash N(x_{k})|+\cdots+|Y_{1}\backslash N(x_{k})|
				=\alpha(X,   Y)=|F|+|\phi(F)|\\
				=&\ |F\cap X_{k}|+|\phi(F)\cap Y_{k}|+|F
				\backslash X_{k}| +|Y_{k-1}\backslash N(F)|+\cdots+|Y_{1}\backslash N(F)|.
			\end{aligned}
		\end{equation*}
		
		 If $F\backslash X_{k}\neq \emptyset$, then $k\geq 2$. 	
		By Proposition \ref{7} \textnormal{(ii)}, similar to  Case 1 in the proof of Theorem \ref{1} $(1)$, we have 
		\begin{equation*}
			\begin{aligned}
				&\ |F\cap X_{k}|+|\phi(F)\cap Y_{k}|+|Y_{k-1}\backslash N(x_{k})|+\cdots+|Y_{1}\backslash N(x_{k})|\\
				\leq &\ |F\cap X_{k}|+|\phi(F)\cap Y_{k}|+|F\backslash X_{k}|+|Y_{k-1}\backslash N(x_{k})|-2|F\backslash X_{k}|+\cdots+|Y_{1}\backslash N(x_{k})|-2|F\backslash X_{k}|,
			\end{aligned}
		\end{equation*}
		which is impossible. 
	If  $F\backslash X_{k}=\emptyset$.
		By Proposition \ref{7} \textnormal{(iii)}, similar to  Case 2 in the proof of Theorem \ref{1} $(1)$, we have 
		\begin{equation*}
			\begin{aligned}
				&\ |F\cap X_{k}|+|\phi(F)\cap Y_{k}|+|Y_{k-1}\backslash N(x_{k})|+\cdots+|Y_{1}\backslash N(x_{k})|\\
				\leq &\ |F\cap X_{k}|+|\phi(F)\cap Y_{k}|+|Y_{k-1}\backslash N(x_{k})|-1+\cdots+|Y_{1}\backslash N(x_{k})|-1\\
				=&\ |F\cap X_{k}|+|\phi(F)\cap Y_{k}|+|Y_{k-1}\backslash N(x_{k})|+\cdots+|Y_{1}\backslash N(x_{k})|-(k-1),
			\end{aligned}
		\end{equation*}
	which implies	$k=1$ and $F$ is a nontrivial fragment in $G_{2}(X_{1}, Y_{1})$. By Theorem \ref{ref2},  this is impossible.
		Therefore,  there are no nontrivial fragments in $G_{2}(X, Y)$.
	\end{proof}
	
	In order to prove Theorem \ref{2}  $(2)$, we need the following  lemma.
	\begin{lemma}\label{11}
		Let $m$ and $i$ be integers with $0\leq i\leq m$.  Then $q^{i(m-i)}\leq {m \brack i}\leq q^{i(m-i+1)}$, and $q^{i(m-i)}< {m \brack i}< q^{i(m-i+1)}$ if $0< i< m$.
	\end{lemma}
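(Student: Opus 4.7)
The plan is to bound each factor of the explicit product
\[
{m \brack i} \;=\; \prod_{j=0}^{i-1} \frac{q^{m-j}-1}{q^{i-j}-1}
\]
individually by $q^{m-i}$ from below and $q^{m-i+1}$ from above, and then multiply the $i$ factor-wise estimates. Setting $a=m-j$ and $b=i-j$, so that $a-b=m-i$, it suffices to prove that for any integers $a\geq b\geq 1$ one has
\[
q^{a-b} \;\leq\; \frac{q^a-1}{q^b-1} \;<\; q^{a-b+1},
\]
with the left inequality strict when $a>b$.

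For the lower bound I would use the identity $q^a-1 = q^{a-b}(q^b-1) + (q^{a-b}-1)$, which after dividing by $q^b-1$ gives
\[
\frac{q^a-1}{q^b-1} \;=\; q^{a-b} + \frac{q^{a-b}-1}{q^b-1} \;\geq\; q^{a-b},
\]
with equality exactly when $a=b$. For the upper bound I would clear denominators: the desired inequality rearranges to $q^{a-b+1}-1 < q^a(q-1)$, which follows at once from $q^{a-b+1}\leq q^a$ (since $b\geq 1$) together with $q^a(q-1)\geq q^a$ (since $q\geq 2$), giving the chain $q^{a-b+1}-1 \leq q^a-1 < q^a \leq q^a(q-1)$.

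Multiplying the $i$ factor-wise bounds then yields $q^{i(m-i)}\leq {m\brack i}\leq q^{i(m-i+1)}$. For the strict version I would separate the three ranges of $i$: when $i=0$ the product is empty and equals $1=q^0$, so both bounds are attained; when $i=m$ every factor equals $1$, so the lower bound is attained while the upper bound remains strict; when $0<i<m$ every single factor is strictly above $q^{m-i}$ (because then $m-j>i-j$ for each $j$) and strictly below $q^{m-i+1}$, hence the product satisfies both strict inequalities in the conclusion. There is no genuine obstacle here; the only care needed is tracking when equality can occur in the factor-wise bound, and this is exactly handled by isolating the boundary cases $i\in\{0,m\}$ that the statement already excludes from its strict version.
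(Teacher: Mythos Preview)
Your proof is correct. The paper does not give a proof of this lemma at all; it is stated as a standard estimate for Gaussian binomial coefficients and used directly in the proof of Theorem~\ref{2}(2). Your factor-by-factor bounding of $\dfrac{q^{a}-1}{q^{b}-1}$ between $q^{a-b}$ and $q^{a-b+1}$ is the standard way to derive these inequalities, and your treatment of the equality cases at $i=0$ and $i=m$ is accurate.
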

	\begin{proof}[\textnormal{\textbf{Proof of Theorem \ref{2}  $(2)$.}}]
		The proof of inequality $(1.2)$  shows that $G_{2}(X, Y)$ satisfies all conditions in Theorem \ref{import}. By Proposition \ref{7}\textnormal{(ii)} and symmetry,  we get $d(X_{k-1})-1>d(X_{k})$ and $d(Y_{\ell-1})-1>d(Y_{\ell})$. According to Proposition \ref{5},  Theorem \ref{import} \textnormal{(iv)} does not hold.
		Then  we just need to prove that $|Y|-d(X_{k})> |X|-d(Y_{\ell})$ when $\max R\bigtriangleup S \in S$ and $|Y|-d(X_{k})< |X|-d(Y_{\ell})$ when $\max R\bigtriangleup S \in R$ by Theorem \ref{import}.  We only prove the former,  the proof of the latter is similar.
		
		Let $s_{\ell-p}=\max R\bigtriangleup S$,  then $r_{k-p+1}=s_{\ell-p+1}$, $r_{k-p+2}=s_{\ell-p+2}$, $\ldots$ , $r_{k}=s_{\ell}$ .  If $k=p$, then $|Y|-d(X_{k})> |X|-d(Y_{\ell})$ holds. 
		Next we may assume $k>p$. 
		
		Note that $s_{\ell -p}> r_{k-p}\geq t$.  Together with Lemma \ref{11} and $r_{k}+s_{\ell-p}-t<n$,  we have 
		\begin{equation*}\tag{$4.3$}
			\begin{aligned}
				q^{(r_{k}-t)(s_{\ell-p}-t)}{r_{k}\brack t}{n-r_{k}\brack s_{\ell-p}-t}&> q^{(r_{k}-t)(s_{\ell-p}-t)+t(r_{k}-t)+(s_{\ell -p}-t)(n-r_{k}-s_{\ell-p}+t)}\\
				&=q^{(s_{\ell-p}-t)n-s_{\ell-p}^{2}+tr_{k}+ts_{\ell-p}-t^{2}}		\geq q^{(s_{\ell-p}-t)n-s_{\ell-p}^{2}+tr_{k}}.
			\end{aligned}
		\end{equation*}
		
	For any  $i\in [k-p]$ and $t\leq j\leq r_{i}$,	by  Lemma \ref{11}, we have
		\begin{equation*}
			\begin{aligned}
				q^{(s_{\ell}-j)(r_{i}-j)}{s_{\ell}\brack j}{n-s_{\ell}\brack r_{i}-j}&\leq q^{(s_{\ell}-j)(r_{i}-j)+j(s_{\ell}-j+1)+(r_{i}-j)(n-s_{\ell}-r_{i}+j+1)}\\
				&=q^{(r_{i}-j)n-r_{i}^{2}+r_{i}+j(r_{i}+s_{\ell}-j)}
				\leq q^{(r_{i}-j)n-r_{i}^{2}+r_{i}+r_{k}(r_{i}+s_{\ell}-t)}\\
				&=q^{(r_{i}-j)n+r_{i}(r_{k}+1-r_{i})+r_{k}s_{\ell}-tr_{k}}
				\leq q^{(r_{i}-j)n+r_{k}^{2}+r_{k}s_{\ell}-tr_{k}},
			\end{aligned}                                             
		\end{equation*}
		implying that
		\begin{equation*}\tag{$4.4$}
			\begin{aligned}
				\sum_{j=t}^{r_{i}}q^{(s_{\ell}-j)(r_{i}-j)}{s_{\ell}\brack j}{n-s_{\ell}\brack r_{i}-j}
				\leq  \sum_{j=t}^{r_{i}} q^{(r_{i}-j)n+r_{k}^{2}+r_{k}s_{\ell}-tr_{k}}
				\leq      q^{(r_{i}-t)n+r_{k}^{2}+r_{k}s_{\ell}-tr_{k}+1}.	\end{aligned}                                           
		\end{equation*}
		
		By computation, we have
		\begin{equation*}
			\begin{aligned} 
				&|Y|-d(X_{k})-\left(|X|-d(Y_{\ell}) \right)\\
				=&\sum_{j=t}^{s_{1}}q^{(r_{k}-j)(s_{1}-j)}{r_{k}\brack j}{n-r_{k}\brack s_{1}-j}+\cdots+\sum_{j=t}^{s_{\ell-p}}q^{(r_{k}-j)(s_{\ell-p}-j)}{r_{k}\brack j}{n-r_{k}\brack s_{\ell-p}-j}\\&-\left(\sum_{j=t}^{r_{1}}q^{(s_{\ell}-j)(r_{1}-j)}{s_{\ell}\brack j}{n-s_{\ell}\brack r_{1}-j}+\cdots+\sum_{j=t}^{r_{k-p}}q^{(s_{\ell}-j)(r_{k-p}-j)}{s_{\ell}\brack j}{n-s_{\ell}\brack r_{k-p}-j} \right).
			\end{aligned}
		\end{equation*}
		This together with (4.3), (4.4) and $s_{\ell-p}>r_{k-p}$ yields
		\begin{equation*}
			\begin{aligned}
				|Y|-d(X_{k})-|X|+d(Y_{\ell})>&\  q^{(s_{\ell-p}-t)n-s_{\ell-p}^{2}+tr_{k}}-q^{(r_{k-p}-t)n+r_{k}^{2}+r_{k}s_{\ell}-tr_{k}+2}\\
				\geq &\  q^{(r_{k-p}-t+1)n-s_{\ell-p}^{2}+tr_{k}}-q^{(r_{k-p}-t)n+r_{k}^{2}+r_{k}s_{\ell}-tr_{k}+2}\\
				\geq &\  q^{(r_{k-p}-t)n-s_{\ell-p}^{2}+tr_{k}+s_{\ell}^{2}+r_{k}^{2}+s_{\ell}r_{k}}-q^{(r_{k-p}-t)n+r_{k}^{2}+r_{k}s_{\ell}-tr_{k}+2}\\
				\geq &\  q^{(r_{k-p}-t)n+r_{k}^{2}+r_{k}s_{\ell}+tr_{k}}-q^{(r_{k-p}-t)n+r_{k}^{2}+r_{k}s_{\ell}-tr_{k}+2}\geq0.
			\end{aligned}                                             
		\end{equation*}
	The desired result follows.
	\end{proof}
	  	
\end{document}